\DeclareMathOperator{\rk}{rk}
\DeclareMathOperator{\Pic}{Pic}
\DeclareMathOperator{\Cl}{Cl}
\DeclareMathOperator{\Hom}{Hom}
\DeclareMathOperator{\Sing}{Sing}
\DeclareMathOperator{\Image}{Im}
\DeclareMathOperator{\Ker}{Ker}
\newcommand{\XX}{\mathcal X} 
\newcommand{\FF}{\mathcal F}
\newcommand{\OO}{\mathcal O}
\newcommand{\LL}{\mathcal L}
\newcommand{\DD}{\mathcal D}
\newcommand{\A}{\mathbb A} 
\newcommand{\CC}{\mathbb C} 
\newcommand{\RR}{\mathbb R} 
\newcommand{\PP}{\mathbb P}
\newcommand{\ZZ}{\mathbb Z}
\makeatletter\@addtoreset{equation}{section} \makeatother
\theoremstyle{plain}
\newtheorem{thm}[equation]{Theorem}
\newtheorem{lem}[equation]{Lemma} 
\newtheorem{cor}[equation]{Corollary} 
\newtheorem{prop}[equation]{Proposition} 
\theoremstyle{definition}
\newtheorem{mydef}[equation]{Definition} 
\newtheorem{rem}[equation]{Remark}
\newtheorem{ex}[equation]{Example}
\title{Small toric degenerations of Fano threefolds}
\author{Sergey Galkin}
\begin{document}
\date{April 1, 2008}
\maketitle
\abstract{
We classify smooth Fano threefolds that admit degenerations to toric Fano threefolds with ordinary double points.
}
\section*{Introduction}\label{sec:intro}

We consider small toric degenerations of Fano threefolds. These are the degenerations of smooth Fano threefolds to toric Fano threefolds with ordinary double points (see Definitions~\ref{def: deformation}, \ref{def: small degeneration}). These degenerations has applications in mirror symmetry. Mirror constructions for toric manifolds and complete intersections therein are given by Givental and Batyrev \cite{Ba93, Ba94, Gi}.

If a Fano manifold $Y$ admits a small toric degeneration $X$, a mirror of $Y$ can be produced via a toric construction, and it can be used to compute some Gromov--Witten invariants of $Y$.

In that way, using small toric degenerations of Grassmannians (constructed in \cite{S2}) and varieties of partial flags (constructed in \cite{GL2}), mirror images of these homogeneous varieties were constructed in \cite{BFKS1, BFKS2}. 

Generalizing these examples Batyrev introduced the notion of small toric degeneration of a Fano variety \cite{Ba97}.
The complete classification of smooth Fano threefolds is well known thanks to works of Fano, Iskovskikh, Shokurov, Mori--Mukai \cite{Is79,MM1,MM2,MM3} (also see \cite{Is88,Mu92,IP} for reviews and alternative constructions). 

Batyrev posed a very natural question \cite[Question 3.9]{Ba97}: 
\begin{quote}
\emph{Which 3-dimensional nontoric smooth Fano varieties do admit small toric degenerations?}
\end{quote}

Theorem~\ref{thm: admitting toric degeneration} of this paper provides a complete answer to this question.
Section~\ref{sec:applications} contains some application of these degenerations.

\tableofcontents

\section{A formulation of the main result.}\label{sec:claim}
\begin{mydef}\label{def: deformation}
	Deformation is a flat proper morphism
	\[
	\pi: \XX\to\Delta,
	\]
	where $\Delta$ is a unit disc $\{|t| < 1\}$, and $\XX$ is an irreducible complex manifold. All the deformations	we consider are projective ($\pi$ is a projective morphism over $\Delta$). Denote fibers of $\pi$ by $X_t$, and let $i_{t\in\Delta}$ be the inclusion of a fiber $X_t\to X$.
\end{mydef}

If all fibers $X_{t\ne 0}$ are nonsingular, then the deformation $\pi$ is called a degeneration of $X_{t\ne 0}$ or a
smoothing of $X_0$. If at least one such morphism $\pi$ exists, we say that varieties $X_{t\ne 0}$ are {\it smoothings}
of $X_0$, and $X_0$ is a {\it degeneration} of $X_{t\ne 0}$.

For a coherent sheaf $\FF$ on $\XX$ over $\Delta$ and $t\in\Delta$ the symbol $\FF_t$ stands for the restriction $i^*_t\FF$ to the fiber over $t$. In particular there is a well-defined restriction morphism on Picard groups $i^*_t: \Pic(\XX)\to\Pic(\XX_t)$.

\begin{mydef}\label{def: small degeneration}[\cite{Ba97}]
	Degeneration (or a smoothing) $\pi$ is {\it small}, if $X_0$ has at most Gorenstein terminal singularities (see \cite{KMM,YPG}), and for all $t\in\Delta$ the restriction $i^*_t: \Pic(\XX)\to\Pic(\XX_t)$ is an isomorphism.
\end{mydef}

All 3-dimensional terminal Gorenstein toric singularities are nodes i.e. ordinary double points analytically isomorphic to $(xy = zt)\subset\A^4$ (see e.g. \cite{Ni}).

\begin{mydef}\label{def: index}
	The {\it index} of a (Gorenstein) Fano variety $X$ is the highest $r>0$, such that anticanonical
	divisor class $-K_X$ is an $r$-multiple of some integer Cartier divisor class $H$:
\[
-K_X=rH.
\]
\end{mydef}
\begin{mydef}\label{def: d}
	Let $H\in\Pic(X)$ be a Cartier divisor an on $n$-dimensional variety $X$, and $D_1,\ldots,D_l$ be a base of lattice $H^{2k}(X,\ZZ)/{\rm tors}$. Define $d^k(X,H)$ as a discriminant of the quadratic form $(D_1,D_2) = (H^{n-2k}\cup D_1\cup D_2)$ on $H^{2k}(X,\ZZ)/{\rm tors}$. For a Gorenstein threefold $X$ denote by $d(X) = d^1(X,-K_X)$ the anticanonical discriminant of $X$.
\end{mydef}

If $X$ is a smooth variety and $H$ is an ample divisor, then hard Lefschetz theorem states that $d^k(X,H)$ is nonzero.

\begin{mydef}\label{def: principal invariants}
	Let $X$ be a Fano threefold. Consider Picard number $\rho=\rk\Pic(X) = \dim H^2(X)$,	half of third Betti number $b=\frac{1}{2} \dim H^3(X)$, (anticanonical) degree $\deg = (-K_X)^3$, Fano index $r$ (see def. \ref{def: index}) and (anticanonical) discriminant $d$ (see def. \ref{def: d}). Numbers $\rho$, $r$, $\deg$, $b$, $d$ form a set	of {\it principal invariants} of smooth Fano threefold.
\end{mydef}

\begin{mydef}\label{def: notation}
	We use the following notations for the (families of) smooth varieties
	\begin{itemize}
		\item $\PP^n$ --- $n$-dimensional projective space;
		\item $G(l,N)$ --- Grassmannian of $l$-dimensional linear subspaces in in $N$-dimensional space.
		\item[] families of smooth surfaces:
		\item $S_d$, $d=1,\ldots,8$ --- del Pezzo surfaces of degree $d$ and index 1;
		\item[] and families of smooth Fano threefolds:
		\item $Q$ --- a quadric in $\PP^4$;
		\item $B_4$ --- intersections of two quadrics in $\PP^5$;
		\item $B_5$ --- del Pezzo quintic threefold, that is a section of $G(2,5)$ by linear subspace of codimension 3;
		\item $V_{22}$ --- Fano threefolds of genus 12 with $\rho=1$;
		\item $W$ --- divisor of bidegree $(1, 1)$ in $\PP^2\times\PP^2$ (i.e. $\PP_{\PP^2}(T_{\PP^2})$);
		\item $V_{\rho.N}$ ($\rho=2,3,4$) --- families of Fano threefolds with Picard number $\rho$ and number $N$ in Mori--Mukai's tables \cite[Table 2, Table 3, Table 4]{MM2}.
	\end{itemize}
\end{mydef}

We use the standard notations for toric varieties \cite{Da78,Fu93,Ba93}: a toric variety $X$ corresponding to a fan $\Sigma$ in the space $N=\ZZ^{\dim X}$, each ample divisor $H$ on $X$ corresponds to a polytope $\Delta_H$ in the dual space $M=\Hom(N,\ZZ)$; we denote the variety $X$ by the symbols $X_\Sigma$ or $\PP(\Delta)$.

\begin{thm}\label{thm: admitting toric degeneration}
	These and only these families of non-toric smooth Fano threefolds $Y$ do admit small
	degenerations to toric Fano threefolds (in notations \ref{def: notation}):
	\begin{enumerate}
		\item 4 families with $\Pic(Y)=\ZZ$: $Q,\ B_4,\ B_5,\ V_{22}$;
		\item 16 families with $\Pic(Y)=\ZZ^2$: $V_{2.n}$, where $n = 12,\ 17,\ 19,\ 20,\ 21,\ 22,\ 23,\ 24,\ 25,\ 26,\ 27,\ 28,\\ 29,\ 30,\ 31,\ 32$;
		\item 16 families with $Pic(Y)=\ZZ^3$: $V_{3.n}$, where $n = 7,\ 10,\ 11,\ 12,\ 13,\ 14,\ 15,\ 16,\ 17,\ 18,\ 19,\ 20,\\ 21,\ 22,\ 23,\ 24$;
		\item 8 families with $Pic(Y)=\ZZ^4$: $V_{4.n}$, where $n = 1,\ 2,\ 3,\ 4,\ 5,\ 6,\ 7,\ 8$.
	\end{enumerate}
	All these degenerations are listed in section \ref{sec:answer}.
\end{thm}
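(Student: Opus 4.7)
The plan is to reduce the theorem to a finite enumeration of nodal toric Fano threefolds together with a matching of principal invariants, split into an invariant-preservation step, an enumeration step, a smoothability step, and a final identification.

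First I would verify that a small degeneration $\pi:\XX\to\Delta$ preserves every principal invariant of Definition~\ref{def: principal invariants}. The Picard number $\rho$ is preserved by Definition~\ref{def: small degeneration}. The index $r$ is preserved because, by smallness, $-K_{X_t}$ extends to $-K_{\XX}$ and the divisibility of $-K_{X_t}$ in $\Pic(X_t)$ is detected on the (isomorphic) $\Pic(\XX)$. The degree $(-K)^3$ is constant in any flat family. For the topological invariants $b$ and $d$, I would use the fact that the local Milnor fibre at each node is homotopy equivalent to $S^3$, together with the Clemens--Schmid exact sequence (or a direct analysis of nearby cycles), to compare $H^*(X_0)$ with $H^*(X_t)$; the Picard-group constancy rules out cycles in $H^4$ that would change $d$, while the node contributes only to $H^3$ in a controlled way, giving $b(X_0)=b(X_t)$ and $d(X_0)=d(X_t)$.

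Second, by \cite{Ni} a three-dimensional Gorenstein terminal toric singularity is a node, so the candidates for $X_0$ are exactly the three-dimensional reflexive polytopes all of whose singular vertex cones are lattice-equivalent to a cone over a unit parallelogram. This is a finite, effective search inside the Kreuzer--Skarke list of reflexive three-polytopes. Third, for each candidate $X_0$ I would decide smoothability. The practical criterion is constructive: either one exhibits a Minkowski decomposition of the anticanonical polytope $\Delta_{-K_{X_0}}$ and uses it to build a flat family whose general fibre is smooth, or one realizes $X_0$ as the special fibre of a linear pencil of subvarieties of a fixed smooth ambient variety (a Grassmannian, a product of projective spaces, or a projectivised bundle). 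Once smoothability is established, I compute $(\rho,r,\deg,b,d)$ for $X_0$ and compare with the Mori--Mukai tables \cite{MM1,MM2,MM3}, which in the relevant range uniquely identify the family. The families whose invariants coincide with a smooth toric Fano threefold are then removed by the non-toric hypothesis, and what remains is precisely the list in (1)--(4).

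The hardest step is the third one. Smoothability of a Gorenstein nodal Fano threefold is not automatic: one needs a global relation among the vanishing three-spheres, or equivalently an obstruction-theoretic condition that goes beyond the local geometry of the nodes. For the cases at hand I would handle it constructively, case by case, by producing explicit degenerations: Minkowski decompositions for most Picard-rank-one cases, toric blow-down/blow-up constructions for higher Picard rank, and direct pencils inside fixed ambient varieties for the remaining sporadic cases. The bulk of the work, and the content of section~\ref{sec:answer}, is exhibiting such a degeneration for every family in (1)--(4) and checking that the enumeration of candidates produces no other smoothable example.
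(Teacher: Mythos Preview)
Your overall architecture---enumerate nodal toric Fano threefolds, compute invariants, match against the Mori--Mukai tables---is the same as the paper's, but you make one genuine error and badly misjudge where the difficulty lies.

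\textbf{The error concerns $b$.} Your claim that $b(X_0)=b(X_t)$ is false. A (singular) toric variety still has vanishing odd rational cohomology, so $b(X_0)=0$ for every nodal toric Fano threefold; yet the smoothing can have $b(Y)>0$ (e.g.\ $b=2$ for $B_4$, $b=3$ for $V_{2.12}$). The nodes contribute vanishing $3$-cycles to the \emph{smooth} fibre, not to $X_0$. The paper does not compare $H^3(X_0)$ with $H^3(X_t)$ at all; instead it passes to a small crepant resolution $\widetilde{X}\to X_0$, uses that $\widetilde{X}$ is smooth toric so $H^3(\widetilde{X})=0$, and reads $b(Y)$ off the conifold-transition Euler-characteristic count:
\[
b(Y)=p(X_0)+\rho(X_0)-\rho(\widetilde{X})=p+\rho-(v-3),
\]
where $p$ is the number of nodes and $v$ the number of vertices of the fan polytope. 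Thus $b(Y)$ is recovered combinatorially from $X_0$, not preserved by the degeneration.

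\textbf{The misjudgement concerns smoothability.} You call this ``the hardest step'' and propose to establish it case by case via Minkowski decompositions and explicit pencils, worrying about the global Friedman relation among vanishing spheres. In fact it is free: Friedman's theorem (and Namikawa's generalisation) says that \emph{every} Gorenstein terminal Fano threefold admits a smoothing to a smooth Fano, and Jahnke--Radloff show that every such smoothing is automatically small. No constructions are needed. The actual labour in the paper is the finite enumeration (100 nodal toric Fano threefolds, via the Kreuzer--Skarke list) and, in the handful of cases where $(\rho,r,\deg,b)$ does not pin down the Mori--Mukai family, the computation of the discriminant $d$ by doing intersection theory on $\Pic(X_0)$ through a small resolution.
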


\begin{rem}
	A posteriori all these smooth threefolds $Y$ satisfy the following conditions
	\begin{enumerate}
		\item $Y$ is rational (see e.g. \cite{IP}),
		\item $\rho(Y)\leq 4$,
		\item $\deg(Y)=(-K_Y)^3\geq 20$,
		\item $b(Y)=h^{1,2}(Y)\leq 3$,
		\item $b(Y)=3$ only if $Y$ is $V_{2.12}$,
		\item $b(Y)=2$ only if $Y$ is $B_4$ or $V_{2.19}$.
	\end{enumerate}
\end{rem}

\section{A proof of the main result.}\label{sec:proof}

\subsection*{A sketch of the proof} Consider a toric Fano threefold $X$ with ordinary double points.
\begin{description}
	\item[(i)] There is only a finite number of such X. All these threefolds X are explicitly classified.
	\item[(ii)] $X$ admits a smoothing --- a Fano threefold $Y$.
	\item[(iii)] Principal invariants of $Y$ can be expressed via invariants of $X$.
	\item[(iv)] Family of smooth Fano threefolds $Y$ is completely determined by its principal invariants.
	\item[(v)] If some smooth Fano threefold $Y$ admits a degeneration to a nodal toric Fano $X$, then the
	pair $(Y,X)$ comes from the steps (i)-(iv).
\end{description}

The following properties of Fano varieties are consequences of Kawamata--Viehweg theorem,
exponential sequence and Leray spectral sequence.

\begin{prop}(See e.g. \cite{IP,KMM})\label{prop: properties of Fanos}
	Let $X$ be an almost Fano with canonical singularities. Then
	\begin{enumerate}
		\item $H^i(X,\OO)=0$ for all i>0,
		\item $\Pic(X)=H^2(X,\ZZ)$,
		\item $\Pic(X)$ is a finitely generated free $\ZZ$-module.
	\end{enumerate}
	If $\pi: Y\to X$ is a resolution of singularities, the listed properties hold also for $Y$ , and $R^\bullet\pi_{*}\OO_Y=\OO_X$ (i.e. canonical singularities are rational).
\end{prop}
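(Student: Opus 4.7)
My plan is to derive all three statements from Kawamata--Viehweg vanishing, then propagate them to the resolution via the fact that canonical singularities are rational. The first step is to apply Kawamata--Viehweg vanishing directly on $X$: canonical singularities are Kawamata log terminal, and $-K_X$ is nef and big since $X$ is almost Fano, so the theorem yields
\[
H^i(X, \OO_X) = H^i(X, \OO_X(K_X + (-K_X))) = 0 \quad \text{for all } i > 0,
\]
which is assertion (1).

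For (2) I feed this vanishing into the long exact sequence attached to the exponential sheaf sequence $0 \to \ZZ_X \to \OO_X \to \OO_X^* \to 0$. In the fragment
\[
H^1(X, \OO_X) \to H^1(X, \OO_X^*) \to H^2(X, \ZZ) \to H^2(X, \OO_X)
\]
the outer terms vanish by (1), so $\Pic(X) = H^1(X, \OO_X^*) \cong H^2(X, \ZZ)$. For (3), $H^2(X, \ZZ)$ is finitely generated because $X$ is compact; torsion-freeness I would extract from the fact that almost Fano varieties with log terminal singularities are rationally connected, and in particular simply connected, so $H_1(X, \ZZ) = 0$ and universal coefficients leaves no torsion in $H^2$.

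For the resolution $\pi : Y \to X$ I would invoke Elkik's theorem that canonical singularities are rational, which gives precisely $\pi_* \OO_Y = \OO_X$ and $R^i \pi_* \OO_Y = 0$ for $i > 0$. The Leray spectral sequence $H^p(X, R^q\pi_*\OO_Y) \Rightarrow H^{p+q}(Y,\OO_Y)$ then degenerates and yields $H^i(Y, \OO_Y) = H^i(X, \OO_X) = 0$ for $i>0$, after which steps (2) and (3) apply verbatim to the smooth variety $Y$. The only ingredient that goes beyond a direct application of the three tools named in the hint is the simple connectedness used for torsion-freeness in (3); this is where I would expect the main conceptual obstacle, even though it is standard in the Fano literature.
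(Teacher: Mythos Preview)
Your proposal is correct and matches the paper's approach: the paper does not actually give a proof of this proposition, but only states (in the sentence preceding it) that the listed properties ``are consequences of Kawamata--Viehweg theorem, exponential sequence and Leray spectral sequence'' and refers the reader to \cite{IP,KMM}. Your argument fleshes out precisely these three ingredients, and you correctly identify that the torsion-freeness in (3) requires the one extra input (simple connectedness via rational connectedness) not named in the hint.
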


Local topology of smoothings is described by the following

\begin{prop}(see e.g. \cite{Cle-d,KK,Voi}) Let $\pi: \XX\to\Delta$ be a smoothing.
	\begin{enumerate}
		\item Restriction $\pi: \XX\backslash\XX_0$ is a locally trivial fibration of smooth topological manifolds, in particular all the smooth fibers are diffeomorphic (this is known as Ehresmann's theorem).
		\item There is a continuous Clemens map $c: \XX\to X_0$ (outside $c^{-1}(\Sing X_0)$) the map $c$ is smooth). Clemens map $c$ is a deformation retraction of $\XX$ to the fiber $X_0$ and respects the radial retraction $\Delta\to 0$. Restriction of $c$ to the smooth fiber $X_t$ is 1-to-1 correspondence outside singular locus of $X_0$.
	\end{enumerate}
	
\end{prop}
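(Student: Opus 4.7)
The proposition splits into two essentially independent statements, and the plan is to treat them separately.

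\textbf{Part (1).} For the first assertion I would apply Ehresmann's fibration theorem to the proper smooth submersion $\pi\colon\XX\setminus\XX_0\to\Delta\setminus\{0\}$. Note that $\XX\setminus\XX_0$ is smooth because every fibre $X_{t\ne0}$ is smooth and $\pi$ is flat with equidimensional fibres, so $\pi$ has everywhere surjective differential there. The proof of Ehresmann's theorem is standard: fix $t_0\in\Delta\setminus\{0\}$ and a small disc $U\ni t_0$; lift the two real coordinate vector fields on $U$ to smooth vector fields on $\pi^{-1}(U)$ using a partition of unity (possible by the submersion hypothesis); properness of $\pi$ makes the flows complete over $U$, and composing them produces a diffeomorphism $\pi^{-1}(U)\simeq X_{t_0}\times U$ over $U$. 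Connectedness of $\Delta\setminus\{0\}$ then forces all smooth fibres to be mutually diffeomorphic.

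\textbf{Part (2).} For the Clemens map I would construct $c$ by patching local models. Cover $\XX$ by open sets of two kinds: (a) neighbourhoods $V_p$ of smooth points $p\in X_0\setminus\Sing X_0$, where the holomorphic implicit function theorem yields a local product decomposition $\XX|_{V_p}\simeq(V_p\cap X_0)\times\Delta$, on which one takes $c$ to be the first projection; and (b) small analytic neighbourhoods $W_q$ of each singular point $q\in\Sing X_0$. On each $W_q$ one writes down an explicit continuous retraction $W_q\to W_q\cap X_0$ that is smooth away from $q$; in the node case, where the germ of $\pi$ is isomorphic to $\{xy-zw=t\}\subset\A^4\times\Delta$, this can be done concretely by collapsing the vanishing $3$-sphere of the Milnor fibre onto the node. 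A partition of unity subordinate to this cover, with cutoffs in the node charts depending only on $|t|$, glues the local maps into a global continuous $c\colon\XX\to X_0$. A direct check then shows that $c$ is smooth outside $c^{-1}(\Sing X_0)$, is a deformation retraction of $\XX$ onto $X_0$ compatible with the radial retraction of $\Delta$, and restricts on each smooth fibre to a bijection between $X_t$ minus the vanishing cycles and $X_0\setminus\Sing X_0$.

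\textbf{Main obstacle.} The principal difficulty is entirely in part (2), in producing the local retraction near each singular point together with the radial compatibility across the gluing. The explicit node model reduces the existence of a continuous retraction smooth off the singular point to a formula, but one must take care that the partition-of-unity patching does not destroy either the compatibility with $\Delta\to 0$ or the bijectivity of $c|_{X_t}$ over the smooth locus of $X_0$. Both requirements are secured by supporting the partition of unity on small tubes around $\Sing X_0$ and using cutoff functions that factor through $|t|$, so that gluing happens only in a region where both local definitions are pulled back from the base; everything else in the proof is formal.
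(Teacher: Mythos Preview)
The paper does not actually prove this proposition: it is stated with references to Clemens, Kulikov--Kurchanov, and Voisin, and the only justification offered in the text is the single sentence that ``these propositions are purely topological, and essentially are variations of the tubular neighborhood theorem.'' So there is no detailed argument to compare against; your sketch already goes far beyond what the paper provides.

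Your treatment of part~(1) is the standard Ehresmann argument and is fine.

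For part~(2), however, there is a genuine gap in your proposed construction. You build local retractions onto $X_0$ in two kinds of charts and then ``glue the local maps into a global continuous $c$'' with a partition of unity. But a convex combination of two maps into $X_0\subset\XX$ need not land in $X_0$ at all, and even when it does, averaging destroys the property of restricting to the identity on $X_0$ and of being bijective over $X_0\setminus\Sing X_0$. Partition-of-unity arguments glue \emph{vector fields}, not maps, and that is precisely how Clemens proceeds: one constructs a single $C^\infty$ vector field on $\XX$ lifting the radial field $-t\,\partial_t$ on $\Delta$, chosen to be tangent to suitable strata near $\Sing X_0$, and integrates it; the time-$\infty$ flow is the retraction $c$. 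This is also what the paper's one-line remark about the tubular neighborhood theorem is pointing to, since Ehresmann and the Clemens retraction are both flow-of-a-lifted-vector-field constructions. Your local analysis (product chart at smooth points, Milnor fibration at nodes) is exactly the input one uses to build that vector field; the fix is to patch the \emph{lifts of $-t\,\partial_t$} rather than the retractions themselves.
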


These propositions are purely topological, and essentially are the variations of the tubular neighborhood theorem.

\begin{cor}\label{cor: 1}
	$\XX$ and $X_0$ has the same homotopy type (the homotopy equivalences are given by the Clemens map $c : \XX\to X_0$ and the inclusion of the fiber $i_0: X_0\to \XX$). Hence
	\begin{gather*}
		H^2(X_0,\ZZ)=H^2(\XX,\ZZ),\\
		H_2(X_0,\ZZ)=H_2(\XX,\ZZ).
	\end{gather*}
\end{cor}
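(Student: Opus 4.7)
The plan is to deduce the corollary as a direct formal consequence of part (2) of the preceding proposition on local topology of smoothings. That proposition supplies a continuous Clemens map $c: \XX \to X_0$ which is a deformation retraction of $\XX$ onto the central fiber $X_0$, compatible with the radial retraction of $\Delta$ to the origin. By definition of deformation retraction, the composition $c \circ i_0$ is the identity on $X_0$, and the composition $i_0 \circ c$ is homotopic to the identity on $\XX$ via the retracting homotopy. So $i_0$ and $c$ are mutually inverse homotopy equivalences, which is precisely the homotopy-type assertion.

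Given this homotopy equivalence, the second part of the statement follows immediately from homotopy invariance of singular (co)homology: any homotopy equivalence of topological spaces induces isomorphisms on $H_n(-,\ZZ)$ and $H^n(-,\ZZ)$ for all $n$. Specializing to $n=2$ and applying the induced maps $i_0^*: H^2(\XX,\ZZ) \to H^2(X_0,\ZZ)$ and $(i_0)_*: H_2(X_0,\ZZ) \to H_2(\XX,\ZZ)$ gives the two displayed equalities.

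There is essentially no obstacle here; the corollary is a formal unpacking of the Clemens retraction together with homotopy invariance of singular (co)homology. The only thing to keep in mind is that even though $X_0$ is singular and $\XX$ may have more complicated geometry, we are working purely at the level of underlying topological spaces, where the retraction lives and where singular (co)homology is defined without regard to the analytic structure. Thus no smoothness, resolution, or Hodge-theoretic input is needed at this step; the serious content (existence of the Clemens map and its retraction property) has already been imported from the cited sources \cite{Cle-d,KK,Voi}.
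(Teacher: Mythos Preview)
Your argument is correct and matches the paper's approach: the paper states this corollary without proof, treating it as an immediate consequence of the Clemens retraction from the preceding proposition together with homotopy invariance of singular (co)homology, exactly as you have written.
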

\begin{cor}\label{cor: 2}
	For $t\ne 0$ all the images $\Image[\{i_t\}_: H_{\bullet}(X_t,\ZZ)\to H_{\bullet}(\XX,\ZZ)]$ coincide.
\end{cor}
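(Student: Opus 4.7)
The plan is to reduce the statement to the path-connectedness of the punctured disc $\Delta^* := \Delta \setminus \{0\}$ together with the Ehresmann fibration already recorded in the previous proposition.

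First I would fix two points $t, t' \in \Delta^*$ and pick a smooth path $\gamma: [0,1] \to \Delta^*$ with $\gamma(0)=t$ and $\gamma(1)=t'$. By part (1) of the previous proposition, the restriction $\pi: \XX \setminus \XX_0 \to \Delta^*$ is a locally trivial fibration of smooth manifolds, hence its pullback along $\gamma$ is a locally trivial fibration over $[0,1]$. Since the interval is contractible, this pullback is trivializable: there is a diffeomorphism $\Phi: X_t \times [0,1] \to \pi^{-1}(\gamma([0,1]))$ over $[0,1]$ with $\Phi(x,0) = x$. In particular $\phi := \Phi(\cdot,1): X_t \to X_{t'}$ is a diffeomorphism and $\phi_*: H_\bullet(X_t, \ZZ) \to H_\bullet(X_{t'}, \ZZ)$ is an isomorphism.

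Next, composing $\Phi$ with the inclusion $\pi^{-1}(\gamma([0,1])) \hookrightarrow \XX$ yields a continuous map $H: X_t \times [0,1] \to \XX$ satisfying $H(\cdot,0) = i_t$ and $H(\cdot,1) = i_{t'} \circ \phi$. Thus $i_t$ and $i_{t'} \circ \phi$ are homotopic as maps into $\XX$, so they induce the same map on singular homology:
\[
(i_t)_* = (i_{t'})_* \circ \phi_* : H_\bullet(X_t,\ZZ) \to H_\bullet(\XX, \ZZ).
\]
Since $\phi_*$ is a bijection, $\Image (i_t)_* = \Image (i_{t'})_*$ as subgroups of $H_\bullet(\XX, \ZZ)$, and because $t, t' \in \Delta^*$ were arbitrary the claim follows.

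There is no real obstacle here: the only subtlety is being careful that the Ehresmann trivialization is only available away from $0$, which is exactly why the statement excludes $t = 0$. All of the work has already been done in the preceding proposition; this corollary is a formal consequence of the path-connectedness of $\Delta^*$ and the homotopy invariance of singular homology.
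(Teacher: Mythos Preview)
Your proof is correct and follows essentially the same route as the paper: both use Ehresmann's local triviality over an interval in $\Delta^*$ to show that the pushforwards of a cycle from two nearby fibers are homologous in $\XX$. The paper phrases this at the chain level (the product of a $k$-cycle with the interval in a local trivialization is a $(k+1)$-chain whose boundary is the difference of the two images), while you package the same construction as a homotopy $i_t \simeq i_{t'}\circ\phi$ and invoke homotopy invariance of homology.
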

\begin{proof}
	Let $U_i$ be the covering of $\Delta\backslash 0$ such that $\pi$ is locally trivial fibration over elements of the covering $U_i$. Consider a pair of points $t,\ s\in U_i$ and a $k$-cycle $\gamma\in H_k(X_t,\ZZ)$. Let $I\subset U$ be an interval between $t$ and $s$ in $U$, and $\gamma_U$ be a $(k+1)$-cycle in $\XX_I$, corresponding to the product of $I$ and $\gamma$ in a fixed trivialization of $\pi$ over $I$. Then the boundary of $\gamma_U$ in $\XX$ is equal to the difference between $\{i_t\}_*\gamma$ and $\{i_s\}_*\gamma$.
\end{proof}

\begin{thm}[\cite{De68}]\label{thm: Hodge numbers are const.}
	Hodge numbers $h^{p,q}(X_t)$ are constant for all $t\in\Delta\backslash 0$.
\end{thm}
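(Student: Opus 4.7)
The plan is to work on the punctured disc $\Delta^{*} := \Delta \setminus \{0\}$, which is connected, and combine three ingredients: the Hodge decomposition on each smooth fiber, constancy of Betti numbers via Ehresmann, and upper semicontinuity of coherent cohomology in flat families. Since $\pi$ is a smoothing, its restriction to $\Delta^{*}$ is a smooth proper morphism; combined with the projectivity built into Definition~\ref{def: deformation}, this makes every fiber $X_t$ with $t \in \Delta^{*}$ a smooth projective complex manifold.

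For such an $X_t$ the classical Hodge theorem supplies the decomposition
\[
H^n(X_t,\CC) \;=\; \bigoplus_{p+q=n} H^q(X_t,\Omega^p_{X_t}),
\]
so that $b_n(X_t) = \sum_{p+q=n} h^{p,q}(X_t)$. I would next invoke Ehresmann's theorem (already cited in the preceding proposition) to conclude that $\pi|_{\Delta^{*}}$ is a locally trivial $C^{\infty}$ fibration over a connected base, and hence that $b_n(X_t)$ is constant in $t$. In parallel, Grauert's semicontinuity theorem, applied to the flat family $\pi|_{\Delta^{*}}$ and the coherent sheaves $\Omega^p_{\XX/\Delta^{*}}$, shows that each function $t \mapsto h^q(X_t,\Omega^p_{X_t})$ is upper semicontinuous on $\Delta^{*}$. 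A finite sum of upper semicontinuous $\ZZ_{\ge 0}$-valued functions whose total is locally constant must have every summand locally constant; since $\Delta^{*}$ is connected, each $h^{p,q}$ is then globally constant, which is the desired conclusion.

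A cleaner route, closer to Deligne's original argument, skips semicontinuity and works directly in the relative setting: for a smooth proper morphism of complex analytic spaces one shows that the Hodge-to-de Rham spectral sequence degenerates at $E_1$, the sheaves $R^q\pi_{*}\Omega^p_{\XX/\Delta^{*}}$ are locally free of rank $h^{p,q}(X_t)$, and they commute with base change; constancy of Hodge numbers along $\Delta^{*}$ then follows immediately from cohomology and base change.

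The single non-formal input, and hence the main obstacle, is the Hodge decomposition itself, equivalently the $E_1$-degeneration of the Hodge-to-de Rham spectral sequence on a smooth projective variety in characteristic zero. Everything else in either route---Ehresmann, Grauert, and the connectedness of $\Delta^{*}$---is bookkeeping around this one analytic theorem.
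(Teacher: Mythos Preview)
The paper does not give its own proof of this theorem: it is simply quoted, with attribution to Deligne \cite{De68}, as an input to the surrounding argument. So there is nothing in the text to compare your proposal against line by line.

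That said, your proposal is correct and is exactly the standard proof one gives for this statement. The first route you describe---Hodge decomposition on each smooth projective fiber, Ehresmann to make the Betti numbers constant on the connected base $\Delta^{*}$, upper semicontinuity of $t\mapsto h^q(X_t,\Omega^p_{X_t})$ via Grauert applied to the locally free (hence flat) sheaves $\Omega^p_{\XX/\Delta^{*}}$, and then the observation that upper semicontinuous nonnegative integer-valued summands of a constant function must each be locally constant---is the classical argument and has no gaps. The only point worth making explicit is why $\Omega^p_{\XX/\Delta^{*}}$ is flat over $\Delta^{*}$: this holds because $\pi$ is smooth there, so the relative differentials are locally free. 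Your second route, via $E_1$-degeneration of the relative Hodge--de~Rham spectral sequence and local freeness of $R^q\pi_*\Omega^p_{\XX/\Delta^{*}}$, is precisely Deligne's own argument in \cite{De68} and is what the citation points to.
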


\begin{prop}[Semi-continuity theorem, see e.g. \cite{Ha}]\label{thm: semicont} Let $\FF$ be a coherent sheaf on $\XX$, flat over $\OO_\Delta$; put $\FF_t=i_t^*$. Then
	\begin{enumerate}
		\item The Euler characteristic $\chi(X_t,\FF_t)$ does not depend on $t\in\Delta$.
		\item Dimension of $H^i(X_t,\FF_t)$ is upper-semi-continuous as a function of $t$ (i.e. for all $n\in\ZZ$ sets)
		\[
		\{t\in\Delta: h^i(X_t,\FF_t)\geq n\}
		\]
		are closed in Zariski topology).
	\end{enumerate}
\end{prop}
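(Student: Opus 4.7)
The plan is to reduce to a local algebraic statement and invoke Grothendieck's ``cohomology and base change'' machinery. Since both conclusions are local on $\Delta$, I would replace $\Delta$ by a small affine (Stein) neighborhood $\mathrm{Spec}(A)$ of a chosen point, so that $\FF$ becomes a coherent sheaf on a proper scheme $\XX$ over $A$, flat over $A$. The heart of the argument is to construct a bounded complex $K^\bullet$ of finitely generated free $A$-modules together with a natural base-change isomorphism
\[
H^i(X_t, \FF_t) \;\cong\; H^i(K^\bullet \otimes_A k(t)) \quad \text{for every } t \in \mathrm{Spec}(A),
\]
and more generally $R\pi_*\FF \otimes^{L}_A B \simeq K^\bullet \otimes_A B$ for every $A$-algebra $B$. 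The construction starts with a finite affine cover of $\XX$ and the \v{C}ech complex of $\FF$ on this cover; one then replaces this \v{C}ech complex (whose terms are infinite-rank over $A$, though $A$-flat) by a quasi-isomorphic bounded complex of finitely generated free $A$-modules, using that the cohomology groups $H^i(\XX,\FF)$ are finitely generated over $A$ by properness of $\pi$ and coherence of $\FF$.

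Granted the existence of $K^\bullet$, both parts of the proposition become linear algebra over $k(t)$. Let $f^i = \rk_A K^i$; these integers are constant in $t$ by construction. The standard telescoping identity for a bounded complex of vector spaces gives
\[
\chi(X_t,\FF_t) \;=\; \sum_i (-1)^i \dim_{k(t)} H^i(K^\bullet \otimes_A k(t)) \;=\; \sum_i (-1)^i f^i,
\]
which is independent of $t$ and establishes (1). For (2), let $r^i(t)$ denote the rank of the specialized differential $d^i \otimes k(t) : K^i \otimes k(t) \to K^{i+1} \otimes k(t)$. A direct computation shows
\[
h^i(X_t,\FF_t) \;=\; f^i - r^i(t) - r^{i-1}(t).
\]
Each function $r^i(\cdot)$ is lower semi-continuous on $\mathrm{Spec}(A)$, because the locus $\{t : r^i(t) \geq n\}$ is cut out by the non-vanishing of some $n \times n$ minor of a matrix with entries in $A$, hence is Zariski-open. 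Consequently $h^i(X_t,\FF_t)$ is upper semi-continuous in $t$, which is (2).

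The main obstacle is the construction of $K^\bullet$: the naive \v{C}ech complex has terms that are not finitely generated over $A$, so producing a finite-rank free model that simultaneously computes $R\pi_*\FF$ and commutes with arbitrary base change is the nontrivial technical step, and it crucially uses both flatness of $\FF$ over $A$ and properness of $\pi$. Once this replacement is in place, the Euler-characteristic and semi-continuity statements follow formally from rank-versus-kernel considerations for matrices over $A$, as outlined above.
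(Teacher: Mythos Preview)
Your sketch is correct and follows the standard Grothendieck--Mumford approach (construct a bounded complex of finite free $A$-modules computing $R\pi_*\FF$ universally, then read off both statements from rank considerations for the specialized differentials). Note, however, that the paper does not give its own proof of this proposition: it is stated with a reference to Hartshorne \cite{Ha} and used as a black box. Your argument is essentially the proof one finds in that reference (Chapter~III, \S12), so there is nothing to compare --- you have simply unpacked the citation.
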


\begin{rem}
	We will use the following trick: if the cohomology of some coherent sheaf $H^i(X_0,\FF)$ vanish, then assume that $\Delta$ is chosen small enough, so that vanishing holds for the cohomology	of all the fibers over $\Delta$.
\end{rem}

\begin{thm}[\cite{Ka97}]\label{thm: gorenstein}
	Let $X_0$ be a variety with canonical singularities, and $\XX$ --- a deformation. Then total space $\XX$ is $\mathbb{Q}$-Gorenstein (Gorenstein if $X_0$ is) and admit only canonical singularities.
\end{thm}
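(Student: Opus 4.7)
The plan is to establish three facts in order: (i) the total space $\XX$ is Cohen--Macaulay; (ii) some reflexive power $\omega_\XX^{[r]}$ is invertible, where $r$ is the Gorenstein index of $X_0$; (iii) the discrepancies of $K_\XX$ over $X_0$ are all non-negative. Since nearby fibers are smooth by hypothesis of the deformation being a degeneration, all singular behavior is concentrated on $X_0$, so it is enough to work in a neighborhood of $X_0$.

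For (i), I use that canonical singularities are Cohen--Macaulay (via Elkik/Flenner, or via Grauert--Riemenschneider vanishing applied to a resolution). Flatness of $\pi$ over the smooth one-dimensional base $\Delta$ together with the central fiber $X_0$ being Cohen--Macaulay then forces $\XX$ itself to be Cohen--Macaulay near $X_0$ by the standard fiberwise criterion. Step (ii) proceeds by adjunction and Nakayama-style lifting: since $X_0=\{\pi=0\}$ is a principal Cartier divisor, one has $\omega_\XX|_{X_0}\cong\omega_{X_0}$, and taking reflexive $r$-th powers yields $\omega_\XX^{[r]}|_{X_0}\cong\omega_{X_0}^{[r]}$, which is invertible by the definition of $r$. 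The general lifting principle says that on a Cohen--Macaulay space flat over a curve, a divisorial sheaf whose restriction to the Cartier divisor $X_0$ is invertible must be invertible in a neighborhood of $X_0$; one lifts a local generator, obtains a morphism $\OO_\XX\to\omega_\XX^{[r]}$, and uses flatness plus Nakayama's lemma to show the cokernel vanishes in a neighborhood. This gives the $\mathbb{Q}$-Gorenstein conclusion, with $r=1$ recovering the Gorenstein case.

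For step (iii) I would invoke inversion of adjunction. Taking a log resolution $f:\tilde\XX\to\XX$ of the pair $(\XX,X_0)$, with proper transform $\tilde X_0$ and exceptional divisors $E_i$, I write
\[
K_{\tilde\XX}+\tilde X_0 = f^{*}(K_\XX+X_0)+\sum a_i E_i.
\]
Restricting to $\tilde X_0$ and applying adjunction produces a log resolution of $X_0$ whose discrepancies are built from the $a_i$ and are non-negative by the canonical hypothesis on $X_0$. Inversion of adjunction propagates this to $a_i\ge 0$ on the total space. Since $X_0$ is principal one has $f^{*}X_0=\tilde X_0+\sum m_i E_i$ with $m_i\ge 0$, so the actual discrepancies of $\XX$ along $E_i$ equal $a_i+m_i\ge 0$, yielding canonical singularities.

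The main obstacle is the inversion of adjunction step. The Cohen--Macaulay deduction and the Nakayama-style invertibility lifting are essentially formal consequences of flatness and adjunction for a principal divisor, but transporting non-negativity of discrepancies from a Cartier divisor to the ambient space is the substantive input, and is precisely what one is quoting from [Ka97]; the rest of the proof is organizing the adjunction and flatness bookkeeping around this theorem.
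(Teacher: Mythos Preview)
The paper does not give its own proof of this statement: Theorem~\ref{thm: gorenstein} is simply quoted from Kawamata \cite{Ka97} and used as a black box, so there is no in-paper argument to compare your proposal against.

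That said, your outline is a reasonable modern roadmap, and you correctly isolate step~(iii) as the substantive content. Two comments. First, in step~(ii) the identification $\omega_\XX^{[r]}|_{X_0}\cong\omega_{X_0}^{[r]}$ is not automatic for $r>1$: restriction to a Cartier divisor need not commute with taking reflexive hulls, so one must argue separately (e.g.\ by $S_2$-extension from the locus where $\XX$ is smooth along $X_0$) that the restricted sheaf is already reflexive. For the applications in this paper only the Gorenstein case $r=1$ matters, and there ordinary adjunction $\omega_\XX|_{X_0}\cong\omega_{X_0}$ on a Cohen--Macaulay total space is enough, so your Nakayama lift goes through cleanly.

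Second, your closing paragraph is circular. You say the inversion-of-adjunction step ``is precisely what one is quoting from [Ka97]'', but the \emph{entire} theorem is \cite{Ka97}; you have located the hard part but then invoked the theorem itself to discharge it. Kawamata's actual proof does not appeal to a pre-existing inversion-of-adjunction statement (none in the required strength was available in 1997). Instead he establishes the discrepancy inequality directly, via an extension theorem for pluricanonical forms from the central fiber to the total space of a suitable log resolution---an Ohsawa--Takegoshi/Siu-type argument---which then forces the relevant $a_i\ge 0$. So your steps (i)--(ii) are fine bookkeeping, but step~(iii) as written is a restatement of the goal rather than a reduction to something independent.
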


In this case one can use the naive adjunction formula on $X$ (dualizing sheaf coincides with the
canonical one).

Assume that $X_0$ is Gorenstein and admits at most canonical singularities, and either a Calabi-Yau of dimension $\geq 2$ or almost Fano.

\begin{prop}\label{prop: h^i(Xt)=h^0(X0)}
	For all $i$ and $t$
	\[
	h^i(X_t,\OO_{X_t})=h^i(X_0,\OO_{X_0}).
	\]
\end{prop}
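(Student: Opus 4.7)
The plan is to combine the upper semi-continuity of cohomology dimensions and the invariance of the Euler characteristic (both from Proposition~\ref{thm: semicont}) with the vanishing of higher cohomology of $\OO_{X_0}$ guaranteed by Proposition~\ref{prop: properties of Fanos} and its Calabi--Yau analogue. The core observation is that once one knows the values $h^i(X_0,\OO_{X_0})$, semi-continuity pins down every $h^i(X_t,\OO_{X_t})$ at which $X_0$ already vanishes, and then Euler characteristic invariance forces equality at the remaining degrees.

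First, I would record $h^i(X_0,\OO_{X_0})$ in both cases. If $X_0$ is almost Fano with canonical singularities, Proposition~\ref{prop: properties of Fanos} gives $h^0(X_0,\OO_{X_0})=1$ and $h^i(X_0,\OO_{X_0})=0$ for all $i>0$. If $X_0$ is a Calabi--Yau of dimension $n\geq 2$ with canonical singularities, rationality of such singularities ($R^\bullet\pi_{*}\OO_Y=\OO_X$ from Proposition~\ref{prop: properties of Fanos}) combined with Kawamata--Viehweg vanishing on a resolution gives $h^i(X_0,\OO_{X_0})=0$ for $0<i<n$, while Serre duality on the Gorenstein variety $X_0$ (with trivial dualizing sheaf) yields $h^n(X_0,\OO_{X_0})=h^0(X_0,\omega_{X_0})=h^0(X_0,\OO_{X_0})=1$.

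Second, after shrinking $\Delta$ as permitted by the Remark following Proposition~\ref{thm: semicont}, upper semi-continuity gives
\[
h^i(X_t,\OO_{X_t}) \leq h^i(X_0,\OO_{X_0}) \qquad \text{for all } i \text{ and all } t\in\Delta.
\]
This immediately forces $h^i(X_t,\OO_{X_t})=0=h^i(X_0,\OO_{X_0})$ at every index where the right hand side vanishes.

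Third, at the remaining degrees I would invoke constancy of $\chi(X_t,\OO_{X_t})$ together with the trivial bound $h^0(X_t,\OO_{X_t})\geq 1$. In the almost Fano case only $h^0$ survives, so $h^0(X_t,\OO_{X_t})=\chi(X_t,\OO_{X_t})=\chi(X_0,\OO_{X_0})=1$. In the Calabi--Yau case only $h^0$ and $h^n$ survive, so
\[
h^0(X_t,\OO_{X_t})+(-1)^n h^n(X_t,\OO_{X_t}) \;=\; 1+(-1)^n,
\]
combined with $h^0,h^n\leq 1$ and $h^0\geq 1$; a short split on the parity of $n$ then forces $h^0(X_t,\OO_{X_t})=h^n(X_t,\OO_{X_t})=1$.

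The argument is essentially bookkeeping with semi-continuity and Euler characteristics; the only substantive step is the Calabi--Yau case of the first step, where one must pass to a resolution to justify the vanishing of intermediate $h^i(X_0,\OO_{X_0})$, and this is precisely what rationality of canonical singularities (recorded in Proposition~\ref{prop: properties of Fanos}) supplies.
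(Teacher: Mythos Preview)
Your argument is essentially the paper's: vanishing of the intermediate $h^i(X_0,\OO_{X_0})$, propagated by semi-continuity, then Euler-characteristic bookkeeping for $h^0$ and $h^n$. The only structural difference is that the paper, after obtaining vanishing in a neighborhood of $0$, invokes Deligne's constancy of Hodge numbers (Theorem~\ref{thm: Hodge numbers are const.}) to extend the vanishing to all of $\Delta\setminus\{0\}$, whereas you simply shrink $\Delta$ via the Remark following Proposition~\ref{thm: semicont}. Both are valid and the paper's own Remark sanctions your shortcut.

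One correction in your first step: in the Calabi--Yau case the paper takes $h^i(X_0,\OO_{X_0})=0$ for $0<i<n$ as part of the \emph{definition} of Calabi--Yau, so no derivation is needed. Your proposed justification via Kawamata--Viehweg on a resolution does not work as stated: if $\pi\colon Y\to X_0$ is a resolution then $K_Y=\pi^*K_{X_0}+E$ with $E$ effective, so for $K_{X_0}=0$ one gets $K_Y$ effective, and hence $-K_Y$ is not nef and big; Kawamata--Viehweg says nothing about $H^i(Y,\OO_Y)$ here (indeed, abelian varieties have trivial canonical bundle but nonvanishing intermediate $h^i(\OO)$). This does not damage your proof, since the input you use is definitional in the paper's setup, but that sentence should be dropped.
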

\begin{proof}
	Consider $h^i(X_t,\OO_{X_t})_{0<i<\dim X_t}$ as a function of $t$. It is upper-semi-continuous (see \ref{thm: semicont} (ii)),	and equal to 0 for $t=0$ (by the definition if $X$ is Calabi-Yau, or by \ref{prop: properties of Fanos} if $X$ is almost Fano). Hence this function is 0 in some neighborhood of 0. This means it is identical to 0 over $\Delta$ (Theorem \ref{thm: Hodge numbers are const.}). Since
	$h^0(X_t,\OO)=1$ for all $t$, from Proposition \ref{thm: semicont} (i) if follows that $h^n(X_t)=h^n(X_0)$ for all $t$ (it is equal to 0 in case of almost Fano and 1 for Calabi-Yau).
\end{proof}

By Grauerth's theorem $R^i\pi_*\OO=R^i\pi_*\OO(-K_\XX)$, $\dim X_0>i>0$, and $\pi_*\OO(-K_\XX)$ is a locally
free sheaf over $\Delta$ of rank $h^0(X_0,\OO(-K_{X_0}))$. From the degenerations of Leray spectral sequences
$H^i(\Delta, R^j\pi_*\OO(-K_\XX))$ and $H^i(\Delta,R^j\pi_*\OO)$:
	\begin{gather}
		H^i(\XX,\OO_\XX(-K_\XX))=H^i(X_t,\OO_{X_t}(-K_t))=0,\ \dim X_0>i>0,\ t\in\Delta \label{eq:1} \\
		H^i(\XX,\OO_\XX)=H^i(X_t,\OO_{X_t})=0,\ \dim X_0>i>0,\ t\in\Delta \label{eq:2} \\
		H^0(X_t,\OO_{X_t}(-K_{X_t}))=H^0(X_0,\OO_{X_0}(-K_{X_0})),\ t\in\Delta \label{eq:3} \\
		H^0(\XX,\OO_\XX(-K_\XX))=H^0(X_0,\OO_{X_0}(-K_{X_0}))\otimes H^0(\Delta,\OO) \label{eq:4}.
	\end{gather}
By exponential sequence and the vanishing \ref{eq:2} there are isomorphisms
	\begin{gather}
		\Pic(\XX)=H^2(\XX,\ZZ), \label{eq:5} \\
		\Pic(X_t)=H^2(X_t,\ZZ)  \label{eq: 6}
	\end{gather}
Next proposition is a combination of \ref{eq:5} and \ref{cor: 1}:
\begin{prop}\label{prop: pic-isomorpism of X0 and XX}
	$i_0^*: \Pic(\XX)\to\Pic(X_0)$ is an isomorphism.
\end{prop}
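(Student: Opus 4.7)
The strategy is exactly the one hinted at just before the statement: compare the two exponential-sheaf-sequence computations of the Picard group and reduce the claim to a topological statement about $H^2$ which is already in hand via Clemens' retraction.

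First I would set up both sides. On the total space $\XX$, the isomorphism $\Pic(\XX)\cong H^2(\XX,\ZZ)$ is already recorded as \eqref{eq:5}; it comes from the exponential sequence together with the vanishing \eqref{eq:2} of $H^1(\XX,\OO_\XX)$ (and $H^2$, though only $H^1$ is needed for this direction). On the central fiber $X_0$ the same input is available: by Proposition~\ref{prop: properties of Fanos}, the almost-Fano canonical singularity hypothesis gives $H^1(X_0,\OO_{X_0})=0$, so the exponential sequence on $X_0$ yields $\Pic(X_0)\cong H^2(X_0,\ZZ)$. This is the $t=0$ case of \eqref{eq: 6}.

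Next I would put these into the natural commutative square
\[
\begin{CD}
\Pic(\XX) @>{\sim}>> H^2(\XX,\ZZ) \\
@V{i_0^*}VV @VV{i_0^*}V \\
\Pic(X_0) @>{\sim}>> H^2(X_0,\ZZ).
\end{CD}
\]
Commutativity is just the naturality of the connecting map in the exponential sequence under the morphism of ringed spaces $i_0:X_0\hookrightarrow\XX$. Now Corollary~\ref{cor: 1} asserts precisely that the inclusion $i_0:X_0\hookrightarrow\XX$ is a homotopy equivalence, so the right vertical arrow $i_0^*:H^2(\XX,\ZZ)\to H^2(X_0,\ZZ)$ is an isomorphism. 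Chasing the square then forces the left vertical arrow $i_0^*:\Pic(\XX)\to\Pic(X_0)$ to be an isomorphism as well, which is exactly the claim.

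There is really no obstacle here: the only thing one has to be even slightly careful about is checking that $H^1(X_0,\OO_{X_0})$ vanishes at the (possibly singular) fiber, which is already taken care of in the hypothesis of this subsection ($X_0$ is Gorenstein with at worst canonical singularities and almost Fano), and that the exponential sequence is functorial under $i_0$, which is standard. All the nontrivial work has been done upstream: the Kawamata--Viehweg-type vanishing that produces \eqref{eq:2}, and the Clemens-map retraction that produces Corollary~\ref{cor: 1}.
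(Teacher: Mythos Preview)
Your argument is correct and is exactly the approach the paper takes: the sentence preceding the proposition says it is ``a combination of \eqref{eq:5} and \ref{cor: 1}'', and you have simply unpacked that combination via the naturality square for the exponential sequence together with \eqref{eq: 6} for $t=0$. One small quibble: the parenthetical that ``only $H^1$ is needed'' is not quite right---surjectivity of $c_1$ onto $H^2(\,\cdot\,,\ZZ)$ uses the vanishing of $H^2(\,\cdot\,,\OO)$ as well---but both vanishings are available from \eqref{eq:2} and Proposition~\ref{prop: properties of Fanos}, so the argument goes through unchanged.
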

\begin{prop}\label{prop: pic XX to X0 inject}
	$i_t^*: \Pic(\XX)\to\Pic(X_0)$ is injective, i.e.
	\begin{equation}
	\Ker i_t^*=0.
	\end{equation}
\end{prop}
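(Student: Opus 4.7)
The plan is to translate the claim into topology. By \ref{eq:5}--\ref{eq: 6}, $\Pic(\XX)=H^2(\XX,\ZZ)$ and $\Pic(X_t)=H^2(X_t,\ZZ)$, and both groups are finitely generated and free by Proposition \ref{prop: properties of Fanos}. Hence $\Ker i_t^*=0$ is equivalent to the dual pushforward $(i_t)_*\colon H_2(X_t,\ZZ)\to H_2(\XX,\ZZ)$ having image of full rank.

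By Corollary \ref{cor: 2}, for $t\neq 0$ this image is independent of $t$. Corollary \ref{cor: 1} provides an isomorphism $c_*\colon H_2(\XX,\ZZ)\xrightarrow{\sim}H_2(X_0,\ZZ)$ inverse to $(i_0)_*$; under it the image of $(i_t)_*$ corresponds to the image of
\[
(c|_{X_t})_*=c_*\circ(i_t)_*\colon H_2(X_t,\ZZ)\to H_2(X_0,\ZZ).
\]
So it suffices to prove that this restricted Clemens map is surjective.

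For the final step, analyze the local topology near each node of $X_0$. Put $X_0^{sm}=X_0\setminus\Sing(X_0)$ and let $X_t^o\subset X_t$ be the complement of the vanishing $3$-spheres that collapse onto the nodes under $c$. The Clemens map restricts to a diffeomorphism $X_t^o\xrightarrow{\sim}X_0^{sm}$, fitting into a commutative square
\[
\begin{CD}
X_t^o @>\cong>> X_0^{sm} \\
@VVV @VVV \\
X_t @>c|_{X_t}>> X_0
\end{CD}
\]
whose vertical arrows are inclusions. Consequently $(c|_{X_t})_*$ factors through the inclusion $j\colon H_2(X_0^{sm})\to H_2(X_0)$, so it is enough to show $j$ is surjective. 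By excision, $H_2(X_0,X_0^{sm})$ is a direct sum over nodes of $H_2(CL,L)\cong\tilde H_1(L)$, where $L=S^2\times S^3$ is the link of a $3$-dimensional ordinary double point and $CL$ its contractible cone; since $\tilde H_1(S^2\times S^3)=0$, the map $j$ is surjective and the proof is complete.

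The main obstacle is the topological input in the third paragraph: the identification of the link of a $3$-dimensional ordinary double point as $S^2\times S^3$, together with the excision reduction. Everything else is a formal consequence of Corollaries \ref{cor: 1}--\ref{cor: 2} and the identifications $\Pic=H^2$ already collected in this section.
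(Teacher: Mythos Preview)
Your argument is correct, but it follows a different route from the paper and proves a somewhat weaker statement. The paper's proof is not topological at all: it first uses Corollary~\ref{cor: 2} only to show that $\Ker i_t^*$ is the same for every $t\ne 0$, then takes an invertible sheaf $\LL$ in this common kernel, so that $\LL|_{X_{t'}}\cong\OO_{X_{t'}}$ for all $t'\ne 0$; by semi-continuity (Proposition~\ref{thm: semicont}) applied to $\LL$ and to $\LL^{-1}$ one gets $h^0(X_0,\LL|_{X_0})\ge 1$ and $h^0(X_0,\LL^{-1}|_{X_0})\ge 1$, hence $\LL|_{X_0}\cong\OO_{X_0}$, and then Proposition~\ref{prop: pic-isomorpism of X0 and XX} forces $\LL\cong\OO_\XX$.

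The substantive difference is in generality. The proposition in the paper is stated (and proved) under the running hypothesis that $X_0$ is Gorenstein with at most canonical singularities and is Calabi--Yau or almost Fano; nothing about the local analytic type of the singularities is used. Your proof, by contrast, hinges on the computation $\tilde H_1(\mathrm{link})=0$, which you carry out only for ordinary double points. So you have established the nodal case (which is all that is needed for the main theorem, since all terminal Gorenstein toric threefold singularities are nodes), but not the proposition as stated. Conversely, what your approach buys is a sharper topological conclusion: you actually show that $(i_t)_*$ is \emph{surjective} on $H_2$, not merely of full rank, which immediately gives injectivity of $i_t^*$ over $\ZZ$ without any further index argument. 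A minor remark: your invocation of Corollary~\ref{cor: 2} in the second paragraph is never used in what follows, since you establish surjectivity of $(c|_{X_t})_*$ directly for each fixed $t$.
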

\begin{proof}
	Since for all $\gamma\in H_{\bullet}(X_t)$ and $\Gamma\in H^{\bullet}(\XX)$ we have
	\[
	\langle i_t^*(\Gamma),\gamma\rangle=\langle\Gamma,\{i_t\}_*\gamma \rangle,
	\]
	so from non-degeneracy of the coupling on $X_t$ for $t\ne 0$ and Corollary \ref{cor: 2} we conclude that the	spaces $\Ker i_t^*: H^2(\XX,\ZZ)\backslash{\rm tors}\to H^2(X_t,\ZZ)$ coincide for all $t\ne 0$. Isomorphism \ref{eq:5} implies the same holds for $i_t: \Pic(\XX)\to\Pic(X_t)$, i.e. $\Ker i_t=\Ker i_{t'}$ for all $t,t'\in\Delta\backslash 0$.
	
	Consider an element $\LL\in\Ker i_t^*=\cap_{t'\in \Delta\backslash 0}\Ker i_{t'}^*$. Then $\LL$ is invertible sheaf with the property $\LL_{X_t}=\OO_{X_t}$, $t\in\Delta\backslash 0$. If $t\ne 0$ this trivial line bundle has 1-dimensional space of sections:
	\[
	h^0(X_t,\LL_{X_t})=h^0(X_t,\OO_{X_t})=1,
	\]
	so by semi-continuity (Proposition \ref{thm: semicont})
	\[
	h^0(X_0,\LL_{X_0})\geq 1.
	\]
	In the same way
	\[
	h^0(X_0,\LL_{X_0}^{-1})\geq 1.
	\]
	This means $\LL_{X_0}\cong \OO_{X_0}$. So \ref{prop: pic-isomorpism of X0 and XX} implies $\LL\cong\OO_\XX$.
\end{proof}

By \ref{thm: gorenstein} and adjunction formula for all $t\in\Delta$
\begin{equation}\label{eq: K_{Xt}}
-K_{X_t}=-(K_\XX+X_t)|_{X_t}=i_t^*(-K_\XX).
\end{equation}

Consider $\DD\in\Pic(\XX)$. The fibers $X_0=X$ and $X_t$ are algebraically equivalent, so
\begin{equation}\label{eq: DD}
i_0^*(\DD)^{\dim X}=\DD^{\dim X}\cdot X_0=\DD^{\dim X}\cdot X_t=y_t^*(\DD)^{\dim X}.
\end{equation}

\begin{cor}
	Anticanonical degree $(-K_{X_t})^{\dim X_t}$ does not depend on $t\in\Delta$.
\end{cor}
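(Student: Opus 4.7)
The plan is to deduce the corollary in one short step by combining the two displayed equations that immediately precede it, namely \ref{eq: K_{Xt}} and \ref{eq: DD}. Since Theorem \ref{thm: gorenstein} guarantees that $\XX$ is Gorenstein, the anticanonical class $-K_\XX \in \Pic(\XX)$ is a well-defined Cartier divisor, so we may take $\DD := -K_\XX$ in \ref{eq: DD}.

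First I would use \ref{eq: K_{Xt}}, which identifies the pull-back $i_t^*(-K_\XX)$ with $-K_{X_t}$ for every $t\in\Delta$. Raising to the power $\dim X_t = \dim X_0$ and then applying \ref{eq: DD} with $\DD=-K_\XX$, I would get
\[
(-K_{X_t})^{\dim X_t}
= \bigl(i_t^*(-K_\XX)\bigr)^{\dim X_t}
= (-K_\XX)^{\dim X}\cdot X_t
= (-K_\XX)^{\dim X}\cdot X_0
= (-K_{X_0})^{\dim X_0},
\]
where the middle equality is the key content: $X_t$ and $X_0$ are algebraically equivalent divisor classes on $\XX$ (they are fibers of the flat morphism $\pi$ over points of the connected base $\Delta$), so their intersection with any class on $\XX$ agrees.

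There is no genuine obstacle to overcome here; the corollary is essentially a formal restatement of \ref{eq: DD} in the special case $\DD=-K_\XX$, combined with the identification of $-K_{X_t}$ as the restriction of $-K_\XX$ provided by adjunction (equation \ref{eq: K_{Xt}}). The only thing that had to be checked beforehand, and that is precisely the role of Theorem \ref{thm: gorenstein}, is that $-K_\XX$ exists as a genuine Cartier class on the total space, so that both sides of \ref{eq: DD} are defined as integers and the intersection-theoretic computation above makes sense.
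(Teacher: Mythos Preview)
Your proof is correct and is exactly the argument the paper intends: the corollary is stated without proof immediately after equations \ref{eq: K_{Xt}} and \ref{eq: DD}, and combining them with $\DD=-K_\XX$ (which is Cartier by Theorem~\ref{thm: gorenstein}) is precisely what yields the statement. There is nothing to add.
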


Let $\XX$ be a relative Fano (i.e. $-K_\XX$ is ample over $\Delta$).

\begin{thm}[\cite{Fr}]\label{thm: Friedman} 
	Any Fano threefold $X_0$ with ordinary double points admits a smoothing $\pi: \XX\to\Delta$ with general fiber $X_{t\ne 0}$ being a smooth Fano.
\end{thm}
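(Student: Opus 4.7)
The plan is to argue via deformation theory of nodal Fano threefolds, in the spirit of Friedman's original smoothing results and their refinements by Namikawa and Minagawa. Let $X_0$ be a Fano threefold with ordinary double points. Infinitesimal deformations of $X_0$ are controlled by the cohomology groups $T^i_{X_0} := \operatorname{Ext}^i(\Omega^1_{X_0}, \OO_{X_0})$, with $T^1_{X_0}$ parametrizing first-order deformations and $T^2_{X_0}$ containing the obstructions.

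First, I would compute $T^i_{X_0}$ via the local-to-global spectral sequence
\[
E_2^{p,q} = H^p(X_0, \mathcal{T}^q_{X_0}) \Rightarrow T^{p+q}_{X_0},
\]
where $\mathcal{T}^q_{X_0} = \operatorname{\mathcal{E}xt}^q(\Omega^1_{X_0}, \OO_{X_0})$. Since $X_0$ has only ordinary double points, the sheaves $\mathcal{T}^q_{X_0}$ for $q \geq 1$ are skyscrapers supported at the nodes: for each node $p$, the space $\mathcal{T}^1_{X_0, p}$ is one-dimensional (it records the local smoothing direction of the equation $xy = zt$), while $\mathcal{T}^q_{X_0, p} = 0$ for $q \geq 2$. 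Hence $H^p(X_0, \mathcal{T}^q_{X_0}) = 0$ for $p \geq 1$ and $q \geq 1$, and the spectral sequence degenerates to an exact sequence
\[
0 \to H^1(X_0, T_{X_0}) \to T^1_{X_0} \to \bigoplus_{p \in \Sing X_0} \mathcal{T}^1_{X_0, p} \to H^2(X_0, T_{X_0}) \to T^2_{X_0} \to 0.
\]

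The main step, and the main obstacle, is the cohomological vanishing $H^2(X_0, T_{X_0}) = 0$. Since $X_0$ is Gorenstein with terminal (hence rational) singularities, Serre duality gives $H^2(X_0, T_{X_0}) \cong H^1(X_0, \Omega^1_{X_0} \otimes \omega_{X_0})^*$. Because $-K_{X_0}$ is ample, one expects a Kodaira--Akizuki--Nakano type vanishing, but it must be proved in the singular setting, typically by passing to a resolution $\mu: Y \to X_0$, exploiting that $R^\bullet \mu_* \OO_Y = \OO_{X_0}$ (Proposition \ref{prop: properties of Fanos}), and carefully comparing the $\Omega^1$-sheaves on $Y$ and $X_0$. (This is precisely the point where the Fano hypothesis is crucial: for Calabi--Yau nodal threefolds the analogous vanishing fails in general, and one is forced to impose Friedman's $\delta$-condition.) Once the vanishing is in hand, the exact sequence above simultaneously yields $T^2_{X_0} = 0$ (so deformations of $X_0$ are unobstructed and the versal deformation space is smooth) and the surjectivity of the smoothing map $T^1_{X_0} \twoheadrightarrow \bigoplus_p \mathcal{T}^1_{X_0, p}$.

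The final step is to extract the desired one-parameter smoothing. Unobstructedness produces a versal deformation $\XX \to S$ over a smooth base $S$, and the surjectivity of the smoothing map ensures that a generic analytic curve in $S$ through $[X_0]$ meets transversely each local discriminant branch associated to a node. The restriction of $\XX$ to such a curve gives the sought smoothing $\pi: \XX \to \Delta$, whose fibers $X_t$ for $t \neq 0$ are smooth by the local model $xy - zt = s$ with $s \neq 0$. To conclude that $X_{t \neq 0}$ is Fano, observe that by Theorem \ref{thm: gorenstein} the total space $\XX$ is Gorenstein, so $-K_{X_t} = i_t^*(-K_\XX)$ by (\ref{eq: K_{Xt}}); since ampleness on fibers is an open condition on the base of a proper flat family, after shrinking $\Delta$ we obtain that $-K_{X_t}$ is ample for every $t \in \Delta$, which is the Fano property required.
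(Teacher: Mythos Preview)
The paper does not prove this theorem at all: it is simply cited from \cite{Fr} (and its generalization from \cite{Na}) as an external input, so there is no ``paper's own proof'' to compare against. Your proposal is a correct outline of the standard deformation-theoretic argument in the literature (essentially the Friedman--Namikawa proof): the local-to-global Ext spectral sequence, the identification of $\mathcal{T}^1$ as a skyscraper at the nodes, the key vanishing $H^2(X_0,T_{X_0})=0$ via Kodaira--Akizuki--Nakano on a resolution together with rationality of the singularities, and the openness of ampleness to propagate the Fano condition. This is exactly what the cited references do, so your sketch is faithful to the source the paper relies on, even though the paper itself treats the result as a black box.
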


Friedman's theorem~\ref{thm: Friedman} has a generalization to Gorenstein terminal singularities by Namikawa

\begin{thm}[\cite{Na}]\label{thm: Namikawa}
	Any Gorenstein terminal Fano threefold $X_0$ admits a smoothing $\pi: \XX\to\Delta$ with general fibers $X_{t\ne 0}$ being a smooth Fano.
\end{thm}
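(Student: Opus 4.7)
The plan is to reduce smoothability to a local-to-global deformation problem and then exploit the Fano condition via Kawamata--Viehweg vanishing. First I would invoke Reid's classification of three-dimensional Gorenstein terminal singularities as compound Du Val (cDV) singularities, i.e.\ analytically $(f(x,y,z)+tg(x,y,z,t)=0)\subset \A^4$ where $f=0$ is a Du Val surface singularity. Each such isolated hypersurface singularity is smoothable in a one-parameter family (the versal deformation of an isolated hypersurface singularity is itself smooth and contains a smoothing direction), so the local obstruction sheaf $\mathcal{T}^1_{X_0}$ has a section at each singular point that corresponds to smoothing that point.

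Next I would set up the global deformation theory via the local-to-global spectral sequence
\[
E_2^{p,q}=H^p(X_0,\mathcal{T}^q_{X_0})\Rightarrow \mathbb{T}^{p+q}_{X_0},
\]
where $\mathbb{T}^1_{X_0}$ parametrizes first-order deformations and $\mathbb{T}^2_{X_0}$ contains the obstructions. The key step is to prove the \emph{unobstructedness} of the deformation functor $\mathrm{Def}(X_0)$, so that the Kuranishi space is smooth. This requires the vanishing (or effective control) of $H^2(X_0,T_{X_0})$, for which one uses that $X_0$ is Fano with canonical singularities: by Serre duality on the Gorenstein variety $X_0$ together with Kawamata--Viehweg vanishing applied on a terminal (hence crepant or small) resolution $\pi:Y\to X_0$, and the rationality $R^\bullet\pi_*\OO_Y=\OO_{X_0}$ from Proposition~\ref{prop: properties of Fanos}, one pushes down the vanishing of the appropriate cohomology of $\Omega^1_Y\otimes\omega_Y^{-1}$-type sheaves to bound $H^2(X_0,T_{X_0})$. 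Combined with the vanishing of the higher piece $H^1(X_0,\mathcal{T}^1_{X_0})$ (coming from the isolatedness of the singularities: $\mathcal{T}^1_{X_0}$ is supported in dimension zero), this forces $\mathbb{T}^2_{X_0}=0$ and yields smoothness of $\mathrm{Def}(X_0)$.

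Once unobstructedness is established, I would show that the natural restriction map to the product of local deformation spaces,
\[
\mathrm{Def}(X_0)\longrightarrow \prod_{p\in\Sing X_0}\mathrm{Def}(X_0,p),
\]
is \emph{smooth} (indeed surjective on tangent spaces): the obstruction to surjectivity lies again in $H^1(X_0,T_{X_0})$-type groups that vanish by the same Fano vanishing input used above. Combining this surjectivity with Step~1 (each local factor contains a smoothing direction), one can pick a one-parameter family $\pi:\XX\to\Delta$ whose image hits the smoothing stratum at every singular point simultaneously; by openness, the general fiber is smooth everywhere. Finally, ampleness of $-K_{X_t}$ on a small deformation follows from openness of ampleness combined with \eqref{eq: K_{Xt}}, so the general fiber is a smooth Fano threefold.

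The main obstacle is the unobstructedness step: the vanishing of $H^2(X_0,T_{X_0})$ on a singular variety is not formal, and one must cleverly combine a resolution of singularities, the rationality of canonical (terminal) singularities, Serre duality in the Gorenstein setting, and Kawamata--Viehweg vanishing to extract it from the Fano hypothesis. The case of nodal $X_0$ (Friedman, Theorem~\ref{thm: Friedman}) is easier because $\mathcal{T}^1_{X_0}$ is a skyscraper of rank one at each node and the obstruction analysis simplifies drastically; for general terminal cDV singularities one needs Namikawa's more delicate handling of the local-to-global piece.
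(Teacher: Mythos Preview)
The paper does not give its own proof of this theorem: it is quoted as a result of Namikawa \cite{Na}, exactly as Theorem~\ref{thm: Friedman} is quoted from Friedman, and is used as a black box in the subsequent argument. There is therefore no proof in the paper to compare your proposal against.

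That said, your outline is broadly the strategy Namikawa actually follows in \cite{Na}: Gorenstein terminal threefold singularities are isolated cDV, hence isolated hypersurface singularities, so $\mathcal{T}^2_{X_0}=0$ and $\mathcal{T}^1_{X_0}$ is a skyscraper; the local-to-global spectral sequence then reduces unobstructedness to controlling $H^2(X_0,T_{X_0})$, and the Fano hypothesis enters through vanishing theorems on a resolution. Two small corrections. First, the obstruction to surjectivity of $\mathrm{Def}(X_0)\to\prod_p\mathrm{Def}(X_0,p)$ on tangent spaces sits in $H^2(X_0,T_{X_0})$, not in an ``$H^1$-type'' group (the five-term sequence gives $\mathbb{T}^1\to H^0(\mathcal{T}^1)\to H^2(T_{X_0})$), so the same vanishing feeds both unobstructedness and surjectivity, which is consistent with your narrative but not with the label you gave it. Second, the vanishing of $H^2(X_0,T_{X_0})$ cannot be obtained by a naive Serre-duality-plus-Kawamata--Viehweg argument on $X_0$ itself, because $\Omega^1_{X_0}$ is not locally free; Namikawa works on a resolution and compares carefully (and in closely related settings invokes the $T^1$-lifting criterion of Ran--Kawamata rather than a bare $\mathbb{T}^2=0$). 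You correctly identify this as the crux, but your sketch understates how much work it takes.
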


\begin{prop}\label{prop: rho X0=rho Xt}
	If $X_0$ is (almost) Fano, then the smoothing is small if and only if two pairs of invariants $(\rho,d)$ coincide ($d$ is defined in \ref{def: d}:
	\begin{gather*}
	\rho(X_0)=\rho(X_t),\\
	d(X_0)=d(X_t).
	\end{gather*}
\end{prop}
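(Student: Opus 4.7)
My plan is to reduce the smallness criterion to a purely lattice-theoretic statement about the injection $i_t^*$, and then use the intersection form to extract the index of the image. By Proposition~\ref{prop: pic-isomorpism of X0 and XX}, $i_0^*:\Pic(\XX)\to\Pic(X_0)$ is already an isomorphism, and by Proposition~\ref{prop: pic XX to X0 inject}, $i_t^*:\Pic(\XX)\to\Pic(X_t)$ is injective for every $t\in\Delta$. Since all three Picard groups are finitely generated free $\ZZ$-modules (Proposition~\ref{prop: properties of Fanos}), smallness of $\pi$ is equivalent to requiring, for $t\ne 0$, that $i_t^*$ be surjective, which splits into two conditions: \textbf{(a)} the image has full rank in $\Pic(X_t)$, and \textbf{(b)} the image has index one.

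First I would dispose of \textbf{(a)} by matching ranks: the image of $i_t^*$ has rank $\rk\Pic(\XX)=\rho(X_0)$ by $i_0^*$, so it has full rank in $\Pic(X_t)$ precisely when $\rho(X_0)=\rho(X_t)$.

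Next I would use intersection theory to measure the index in \textbf{(b)}. Define a symmetric bilinear form on $\Pic(\XX)$ by $(D_1,D_2)_{\XX}:=(-K_\XX)\cdot D_1\cdot D_2\cdot X_t$. By flatness of $\pi$ (the polarized three-divisor version of~\ref{eq: DD}) this form is independent of the chosen $t\in\Delta$, and the projection formula combined with~\ref{eq: K_{Xt}} shows it agrees with the $i_t^*$-pullback of the anticanonical intersection form on $\Pic(X_t)$ for every $t$. At $t=0$ the isomorphism $i_0^*$ identifies this form with the anticanonical form on $\Pic(X_0)$, so its discriminant equals $d(X_0)$. For $t\ne 0$, assuming condition \textbf{(a)}, the same form on $\Pic(\XX)$ is the restriction of the anticanonical form on $\Pic(X_t)$ to the finite-index sublattice $i_t^*\Pic(\XX)$, whose discriminant is $[\Pic(X_t):i_t^*\Pic(\XX)]^2\cdot d(X_t)$. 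Equating the two expressions yields
\[
d(X_0)=[\Pic(X_t):i_t^*\Pic(\XX)]^2\cdot d(X_t).
\]
Hard Lefschetz on the smooth fiber $X_t$ with the ample class $-K_{X_t}$ makes $d(X_t)$ nonzero, so \textbf{(b)} holds precisely when $d(X_0)=d(X_t)$. Combining this with \textbf{(a)} gives the stated equivalence.

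The main technical obstacle I foresee is justifying cleanly that the intersection numbers on the possibly-singular total space $\XX$ honestly compute the fiberwise intersections for every $t$; this needs flatness and properness of $\pi$ together with the Gorenstein-ness provided by Theorem~\ref{thm: gorenstein} so that the projection formula and the naive adjunction~\ref{eq: K_{Xt}} apply. One also needs to keep the sign conventions of Definition~\ref{def: d} consistent when comparing discriminants across fibers. Everything else reduces to the standard lattice fact that the discriminant scales by the square of the index on a finite-index sublattice.
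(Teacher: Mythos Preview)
Your argument is correct and follows essentially the same route as the paper: reduce smallness to surjectivity of the injective map $i_t^*$ between free $\ZZ$-modules, split this into the rank condition $\rho(X_0)=\rho(X_t)$ and the index-one condition, and read off the index from the discriminant formula $d(X_0)=[\Pic(X_t):i_t^*\Pic(\XX)]^2\,d(X_t)$. The paper's proof is terser---it simply asserts the index formula---whereas you spell out why the anticanonical pairing descends consistently to every fiber via \ref{eq: K_{Xt}} and \ref{eq: DD}, and you make explicit the appeal to hard Lefschetz to ensure $d(X_t)\neq 0$; these are exactly the ingredients the paper leaves implicit.
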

\begin{proof}
	Bijectivity of $i_0^*$ and injectivity of $i_t^*$ holds in general context (Proposition \ref{prop: pic XX to X0 inject}). Both groups $\Pic(X_t)$ and $\Pic(X)$ are finitely generated lattices (Proposition \ref{prop: properties of Fanos}). Thus equality $\rho(X_0)=\rho(X_t)$ means	that the morphism $i_t^*(i_0^*)^{-1}$ is an isomorphism of lattice $\Pic(X_0)$ with sub-lattice of finite index in	$\Pic(X_t)$. This index is equal to
	\[
	[\Pic(X_t):\Pic(X_0)]=\left ( \frac{d(X_0)}{d(X_t)}\right )^{1/2}.
	\]
\end{proof}

\begin{thm}[\cite{JR}]\label{thm: JR}
	If $\XX$ is a smoothing, and $X_0$ is a Gorenstein Fano threefold with terminal singularities, then $i_t^*$ is an isomorphism for all $t$.
\end{thm}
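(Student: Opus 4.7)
The plan is to use Proposition \ref{prop: rho X0=rho Xt} as a criterion. Since $i_0^*$ is an isomorphism by Proposition \ref{prop: pic-isomorpism of X0 and XX} and $i_t^*$ is injective by Proposition \ref{prop: pic XX to X0 inject}, the composition $i_t^* \circ (i_0^*)^{-1}$ realizes $\Pic(X_0)$ as a sublattice of finite index in $\Pic(X_t)$, and by Proposition \ref{prop: rho X0=rho Xt} this index equals $(d(X_0)/d(X_t))^{1/2}$. It therefore suffices to establish the two equalities $\rho(X_0) = \rho(X_t)$ and $d(X_0) = d(X_t)$. The discriminant equality is automatic once the ranks agree: equation \ref{eq: DD} shows that the triple intersection form on $\Pic(\XX)$ pulls back to the anticanonical intersection forms on $\Pic(X_0)$ and $\Pic(X_t)$ via $i_0^*$ and $i_t^*$ respectively, so the two discriminants can differ only by the square of the index $[\Pic(X_t):\Pic(X_0)]$, which must then be $1$.

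The crux is therefore $\rho(X_0) = \rho(X_t)$, equivalently that $i_t^*: H^2(\XX, \ZZ) \to H^2(X_t, \ZZ)$ is a rational isomorphism. My approach would be via the Clemens retraction $c: \XX \to X_0$, which is a homotopy equivalence by Corollary \ref{cor: 1} and, when restricted to a smooth fiber $X_t$, contracts a local Milnor fiber $F_p$ to each singular point $p \in \Sing X_0$ while being a diffeomorphism elsewhere. A Mayer--Vietoris decomposition of $X_t$ into a tubular neighborhood of $\Sing X_0 \subset \XX$ (intersected with $X_t$) and its complement produces a long exact sequence comparing $H^\bullet(X_t)$, $H^\bullet(X_0 \setminus \Sing X_0)$ and $\bigoplus_p H^\bullet(F_p)$; in degree $2$ this forces the required isomorphism as soon as $H^1(F_p) = H^2(F_p) = 0$ for every $p$.

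The remaining topological input --- and the main obstacle --- is precisely this $2$-connectedness of the local Milnor fibers. By Reid's classification, every Gorenstein terminal $3$-fold singularity is compound Du Val, hence cut out analytically by a single equation $\{f = 0\} \subset \CC^4$ with isolated critical point at the origin. Milnor's theorem then identifies $F_p$ with a bouquet of $3$-spheres, giving $H^1(F_p) = H^2(F_p) = 0$. Plugging this into the Mayer--Vietoris sequence, and combining with the identification $H^\bullet(\XX) = H^\bullet(X_0)$ from Corollary \ref{cor: 1}, shows that $i_t^*$ is surjective on $H^2$, completing the argument. The delicate point to verify is that the Clemens picture really does realize the local Milnor fiber of $f$ at each singular point; this follows from the hypersurface embedding together with Ehresmann's trivialization over $\Delta \setminus 0$.
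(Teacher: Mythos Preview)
The paper does not prove this theorem; it is quoted from Jahnke--Radloff \cite{JR} and used as a black box, so there is no in-paper argument for you to be compared against.

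Judged on its own, your sketch in paragraphs~2--3 is essentially a correct proof. Gorenstein terminal threefold singularities are isolated hypersurface singularities (compound Du Val, by Reid), so by Milnor's theorem each local Milnor fibre $F_p$ is a bouquet of $3$-spheres and in particular has $H^1(F_p)=H^2(F_p)=0$. Comparing the Mayer--Vietoris sequences of $X_0$ and $X_t$ for the decomposition into a small neighbourhood of $\Sing X_0$ and its complement (identified via the Clemens retraction), the five lemma shows that the specialization map $H^2(X_0,\ZZ)\to H^2(X_t,\ZZ)$ is an isomorphism. Combined with Corollary~\ref{cor: 1} and the identifications \eqref{eq:5}, \eqref{eq: 6}, this gives directly that $i_t^*:\Pic(\XX)\to\Pic(X_t)$ is an isomorphism. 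The point you flag at the end, that the Clemens picture locally realises the Milnor fibration of the defining equation, is fine here because $\XX$ is assumed to be a complex manifold (Definition~\ref{def: deformation}), so near each $p$ the map $\pi$ is a submersion off the singular point of an isolated hypersurface singularity.

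The one genuine wrinkle is your paragraph~1. The sentence ``the two discriminants can differ only by the square of the index, which must then be~$1$'' is circular: that the index equals $(d(X_0)/d(X_t))^{1/2}$ is precisely the content of Proposition~\ref{prop: rho X0=rho Xt}, so one cannot deduce the index is $1$ without already knowing $d(X_0)=d(X_t)$. Fortunately this detour is unnecessary. Your Milnor-fibre argument yields \emph{surjectivity} of $i_t^*$ on integral $H^2$, not merely equality of ranks; together with the injectivity of Proposition~\ref{prop: pic XX to X0 inject} this gives the isomorphism outright, and $d(X_0)=d(X_t)$ then follows a posteriori from \eqref{eq: DD}. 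I would excise the discriminant discussion and state from the start that the goal is surjectivity of $H^2(\XX,\ZZ)\to H^2(X_t,\ZZ)$.
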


\begin{cor}
	Any Gorenstein Fano threefold with terminal singularities admits a smoothing, with	general fiber being a smooth Fano threefold, and all such smoothings are small.
\end{cor}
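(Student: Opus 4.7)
The plan is to assemble the statement directly from the two theorems that immediately precede it, together with the definition of small degeneration. Let $X_0$ be a Gorenstein Fano threefold with terminal singularities. First I would apply Namikawa's Theorem~\ref{thm: Namikawa} to produce a smoothing $\pi: \XX \to \Delta$ whose general fiber $X_{t \ne 0}$ is a smooth Fano threefold. This already delivers the existence and smooth-Fano statements in the corollary.

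It remains only to verify the two conditions in Definition~\ref{def: small degeneration} for this (and for any) smoothing of $X_0$. The first condition, that $X_0$ has at most Gorenstein terminal singularities, is simply the hypothesis on $X_0$. For the second condition, that $i_t^*: \Pic(\XX) \to \Pic(X_t)$ is an isomorphism for every $t \in \Delta$, I would invoke the Jahnke--Radloff Theorem~\ref{thm: JR}, which is stated precisely for a smoothing $\XX$ of a Gorenstein terminal Fano threefold $X_0$ and gives exactly this isomorphism for all $t$. Thus the smoothing produced by Namikawa is automatically small, and moreover any other smoothing of $X_0$ will also be small by the same argument applied to it.

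Since both ingredients are quoted black-box theorems, there is no hard step to carry out; the only thing to check is that the hypotheses of Theorem~\ref{thm: Namikawa} and Theorem~\ref{thm: JR} are satisfied by \emph{the same} $X_0$, which they are by assumption. The main conceptual point worth emphasizing in the write-up is that the word ``all'' in the conclusion (``all such smoothings are small'') is not restricted to Namikawa's particular construction: Theorem~\ref{thm: JR} applies to an arbitrary smoothing of $X_0$, so no additional uniqueness argument is needed.
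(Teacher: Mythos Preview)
Your proposal is correct and matches the paper's own proof exactly: the paper simply writes ``This is just a union of \ref{thm: Namikawa} and \ref{thm: JR}.'' Your additional remark that Theorem~\ref{thm: JR} applies to \emph{any} smoothing of $X_0$, thereby justifying the word ``all'', is a useful clarification that the paper leaves implicit.
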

\begin{proof}
	This is just a union of \ref{thm: Namikawa} and \ref{thm: JR}. 
\end{proof}

\begin{cor}\label{cor: 0}
	Gorenstein Fano threefold $X$ with terminal singularities and its smoothing $Y$ has the same invariants $\rho$, $\deg$, $r$, $d$.
\end{cor}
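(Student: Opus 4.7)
The plan is to observe that everything needed has been assembled just above, and the corollary is essentially a bookkeeping exercise combining Theorem~\ref{thm: JR} with the previous propositions and formulas. Let $\pi:\XX\to\Delta$ be the smoothing of $X=X_0$ with $Y=X_{t\ne 0}$ a smooth general fiber, guaranteed by Theorem~\ref{thm: Namikawa}.

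First I would invoke Theorem~\ref{thm: JR} to conclude that $i_t^*:\Pic(\XX)\to\Pic(X_t)$ is an isomorphism for every $t\in\Delta$. Combined with Proposition~\ref{prop: pic-isomorpism of X0 and XX} this gives a canonical identification $\Pic(X)\cong\Pic(\XX)\cong\Pic(Y)$, which already yields $\rho(X)=\rho(Y)$. Moreover, under this identification, the adjunction formula \eqref{eq: K_{Xt}} shows that the class $-K_\XX$ restricts to $-K_X$ on one side and to $-K_Y$ on the other, so the anticanonical classes are identified as well.

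Next, the degree equality $\deg(X)=\deg(Y)$ follows from applying \eqref{eq: DD} to $\DD=-K_\XX$: the fibers $X_0$ and $X_t$ are algebraically equivalent divisors in $\XX$, hence
\[
(-K_X)^3 = i_0^*(-K_\XX)^3 = (-K_\XX)^3\cdot X_0 = (-K_\XX)^3\cdot X_t = i_t^*(-K_\XX)^3 = (-K_Y)^3.
\]
The discriminant equality $d(X)=d(Y)$ is then a special case of Proposition~\ref{prop: rho X0=rho Xt}, whose proof shows that whenever $i_t^*$ is an honest isomorphism of Picard lattices the index $[\Pic(X_t):\Pic(X_0)]=(d(X_0)/d(X_t))^{1/2}$ equals $1$; alternatively, since the intersection form $(D_1,D_2)\mapsto (-K)\cdot D_1\cdot D_2$ is preserved by the lattice isomorphism (using \eqref{eq: DD} applied to suitable combinations $\DD=\alpha(-K_\XX)+\beta D_1+\gamma D_2$ and polarization), its discriminant is preserved.

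Finally, for the index $r$: since $i_t^*$ is an isomorphism of free abelian groups carrying $-K_\XX$ to $-K_{X_t}$, a class $H\in\Pic(X)$ satisfying $-K_X=rH$ corresponds under the composition $i_t^*\circ(i_0^*)^{-1}$ to a class $H'\in\Pic(Y)$ with $-K_Y=rH'$, and conversely. Thus the maximal such $r$ is the same for $X$ and for $Y$. There is no real obstacle here — the only point that merits care is verifying that the identification of Picard lattices really does send $-K_X$ to $-K_Y$ and preserves the trilinear intersection form, and both of these were established in \eqref{eq: K_{Xt}} and \eqref{eq: DD} respectively; everything else is formal.
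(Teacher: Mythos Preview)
Your argument is correct and follows essentially the same route as the paper: Theorem~\ref{thm: JR} gives the equality of Picard numbers (hence $\rho$), equation~\eqref{eq: DD} applied to $-K_\XX$ gives $\deg$, Proposition~\ref{prop: rho X0=rho Xt} then forces the index $[\Pic(Y):\Pic(X)]=1$ and hence $d(X)=d(Y)$, and finally the combination of Theorem~\ref{thm: JR} with \eqref{eq: K_{Xt}} gives $r(X)=r(Y)$. Your proof is simply a more detailed unpacking of the same chain of references.
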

\begin{proof}
	Equality 3.21 states that $\deg(X)=\deg(Y)$. As a corollary of \ref{thm: JR} we have $\rho(X)=\rho(Y)$. Hence from \ref{eq: DD} and \ref{prop: rho X0=rho Xt} one derives $d(X) = d(Y)$. Finally, \ref{thm: JR} with \ref{eq: K_{Xt}} implies $r(X) = r(Y)$.
\end{proof}

Fano threefold Y has only 2 non-trivial Hodge numbers $h^{1,1}(Y)=h^{2,2}(Y)=\rho(Y)$ and $b(Y)=h^{1,2}(Y)=h^{2,1}(Y)=\frac{1}{2}\rk H^3(Y,\ZZ)$; and some trivial: $h^{0,0}(Y)=h^{3,3}(Y)=1$, all other Hodge numbers are zeroes.

\begin{prop}\label{prop: nodes on nodal 3fold}
	Let $X$ be a nodal threefold, $\widetilde{X}\to X$ --- its small crepant resolution, and $Y$ --- a smoothing of $X$ (in literature transformation from $Y$ to $\widetilde{X}$ is called a conifold transition). Denote the number of nodes on $X$ by $p(X)$. Then
	\begin{equation}\label{eq: nodes on nodal 3fold formula}
	b(Y)=p(X)+b(\widetilde{X})+\rho(Y)-\rho(\widetilde{X}).
	\end{equation}
\end{prop}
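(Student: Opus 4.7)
\emph{Plan.} The strategy is to compare $\chi(\widetilde X)$ and $\chi(Y)$ via the local topology at the nodes, and then rewrite each Euler characteristic in terms of $\rho$ and $b$ using the fact that both spaces have the Hodge diamond of an almost-Fano threefold.

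First I would establish the local picture. Around each of the $p(X)$ nodes of $X$, pick a small ball $B$ with boundary diffeomorphic to the link $S^2\times S^3$ of the analytic germ $xy=zt$. Outside the union of these balls, the Clemens map identifies $Y$ with $X$, and the small resolution identifies $\widetilde X$ with $X$. Inside each $B$: the slice of $X$ is the contractible cone on the link (Euler characteristic $1$), the preimage in $\widetilde X$ deformation-retracts onto the exceptional $\PP^1$ (Euler characteristic $2$), and the preimage in $Y$ is the Milnor fibre of $xy=zt$, diffeomorphic to $T^*S^3$ and hence of Euler characteristic $0$. Since $\chi(S^2\times S^3)=0$, additivity of Euler characteristic (Mayer--Vietoris, using the links as glueing data) gives
\[
\chi(\widetilde X)-\chi(Y)=p(X)\bigl(\chi(\PP^1)-\chi(S^3)\bigr)=2\,p(X).
\]

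Next I would compute both $\chi(\widetilde X)$ and $\chi(Y)$ in Hodge-theoretic terms. Both are smooth projective threefolds with $H^i(-,\OO)=0$ for $i>0$: for $Y$ this is Proposition~\ref{prop: properties of Fanos}, and for $\widetilde X$ it follows from the rationality statement $R^\bullet\pi_*\OO_{\widetilde X}=\OO_X$ in the same proposition, combined with the vanishing on $X$. Consequently the Hodge diamonds of $\widetilde X$ and $Y$ are each concentrated in $h^{0,0}=h^{3,3}=1$, $h^{1,1}=h^{2,2}=\rho$, $h^{1,2}=h^{2,1}=b$, so $\chi=2+2\rho-2b$ in both cases. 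Substituting these expressions into the equality of the previous paragraph and dividing by $2$ yields
\[
\rho(\widetilde X)-b(\widetilde X)-\rho(Y)+b(Y)=p(X),
\]
which is a rearrangement of \eqref{eq: nodes on nodal 3fold formula}.

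The main obstacle is the topological step, and in particular the identification of the Milnor fibre of the threefold node with $T^*S^3$ (so that its Euler characteristic vanishes); this is a standard fact but relies on the analytic structure of the singularity rather than on purely formal manipulations. Everything else reduces to the routine Hodge-number bookkeeping sketched above, once the vanishings $H^i(-,\OO)=0$ on both smooth spaces are in hand.
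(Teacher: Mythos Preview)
Your argument is correct and is essentially the same Clemens-type computation as the paper's: both excise neighborhoods of the nodes, identify the local pieces in $\widetilde X$ and $Y$ (retracting to $\PP^1\simeq S^2$ and to $S^3$ respectively), deduce $\chi(\widetilde X)-\chi(Y)=2p(X)$, and finish by writing $\chi=2+2\rho-2b$ on each side. Your version is a bit more explicit in justifying $H^i(\widetilde X,\OO_{\widetilde X})=0$ via the rationality of the singularities, but otherwise the two proofs coincide.
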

\begin{proof}({\it Clemens's argument, see also \cite{NaSteen}}) Compare topological Euler numbers (for non-compact manifolds with a border use Euler number for cohomology with compact support $\chi(M)=\sum_i(-1)^i\dim H_c^i(M,\CC))$ of $\widetilde{X}$ and $Y$\footnote{Alternatively one can compare dimensions of versal deformation spaces for $Y$ and $X$; see also mirror-symmetry explanation \cite{Re,BFKS1}.}.

By throwing away small neighborhoods of all singular points $p_i$ from $X$, we construct a manifold	with the border $M$. Punctured neighborhood of ordinary double point on $X$ is isomorphic to tangent bundle on real sphere $TS^3$ without the 0-section: if $\sum_{i=1}^{4}z_i^2=0$, $z=x+yi$ then $x$ and $y$ can be considered as a pair of nonzero orthogonal (with respect to a standard Euclidean metric) vectors in $\RR^4$ of the same length $r$; vector $x/r$ is a point in $(n-1)$-dimensional sphere of radius i$1$, and $y$ is a tangent vector in that point. This shows that a neighborhood of ordinary double point on $X$ is	isomorphic to $S^2\times S^3$. After crepant resolution it is patched by $S^2\times D^4$, and after smoothing --- by $D^3\times S^3$. Hence
\begin{gather*}
\chi(\widetilde{X})=\chi(M)+p\cdot\chi(S^2),\\
\chi(Y)=\chi(M)+p\cdot\chi(S^3).
\end{gather*}
This implies
\[
\chi(\widetilde{X})=\chi(Y)+2p.
\]
But
\begin{gather*}
\chi(Y)=2+2\rho(Y)-2b(Y),\\
\chi(\widetilde{X})=2+2\rho(\widetilde{X})-2b(\widetilde{X}).
\end{gather*}	
\end{proof}

\begin{prop}\label{prop: b p rho formula}
	If $X$ is a nodal toric threefold corresponding to a polytope with $v$ vertices, $p$ quadrangular faces (i.e. nodes) and $f-p$ triangular faces (smooth fixed points), then $H^3(\widetilde{X})=0$, $\rho(\widetilde{X})=v- 3$. So for smoothing $Y$ of $X$, there is a relation
	\[
	b(Y)=p+\rho(X)-(v-3).
	\]
\end{prop}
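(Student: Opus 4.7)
My plan is to exploit the toric structure of both $X$ and its small resolution $\widetilde{X}$, then feed the result into Proposition~\ref{prop: nodes on nodal 3fold}. Since $X$ is a Gorenstein toric Fano threefold, its fan $\Sigma$ is the cone over the boundary of a reflexive polytope $\Delta$ whose $v$ vertices are the primitive generators of the rays of $\Sigma$. The 2-faces of $\Delta$ are in bijection with the $3$-dimensional cones of $\Sigma$, i.e. with the torus-fixed points of $X$. The hypothesis records that $p$ of these 2-faces are quadrangles, each producing a node $(xy=zt)$, and $f-p$ are triangles, each producing a smooth fixed point; in particular each triangular 2-face is a unimodular simplex in the sublattice it spans.

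Next I would construct $\widetilde{X}$ torically. For each quadrangular 2-face I subdivide by one of its two diagonals into two triangles; equivalently I insert into $\Sigma$ the 2-dimensional cone generated by two opposite rays of the offending $3$-cone. This refinement introduces \emph{no new rays}, so the induced toric birational morphism $\widetilde{X}\to X$ adds no exceptional divisor. Hence it is crepant and small, with each exceptional fiber a $\PP^1$ sitting over a node. A short lattice check shows that the two new triangular 2-faces are again unimodular simplices, so $\widetilde{X}$ is a smooth complete toric threefold with exactly the same $v$ rays as $X$.

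The remaining invariants of $\widetilde{X}$ are then classical toric facts. A smooth complete toric variety has vanishing odd cohomology, so $H^3(\widetilde{X},\ZZ)=0$ and consequently $b(\widetilde{X})=0$. Its Picard rank equals the number of rays minus the dimension, giving $\rho(\widetilde{X})=v-3$. On the smoothing side, Corollary~\ref{cor: 0} yields $\rho(Y)=\rho(X)$.

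Plugging these values into Proposition~\ref{prop: nodes on nodal 3fold} gives
\[
b(Y)=p(X)+b(\widetilde{X})+\rho(Y)-\rho(\widetilde{X})=p+0+\rho(X)-(v-3),
\]
which is the claim. The only step requiring genuine care is the toric construction of $\widetilde{X}$: one must verify that each quadrangular 2-face admits a diagonal subdivision producing unimodular triangles, so that $\widetilde{X}$ is both smooth and obtained \emph{without adding new rays}. Everything else is a direct application of cited results.
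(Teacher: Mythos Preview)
Your argument is correct and follows essentially the same route as the paper: compute $\rho(\widetilde{X})=v-3$ and $b(\widetilde{X})=0$ from the toric structure of the small resolution, then feed this into Proposition~\ref{prop: nodes on nodal 3fold}. The only cosmetic difference is that the paper obtains $\rho(\widetilde{X})=v-3$ via the chain $\Pic(\widetilde{X})=\Cl(\widetilde{X})=\Cl(X)$ (smoothness plus smallness) together with $\rk\Cl(X)=v-3$, whereas you reach the same conclusion by noting directly that the toric subdivision adds no rays and then applying ``rays minus dimension'' to the smooth $\widetilde{X}$; these are two phrasings of the same toric fact.
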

\begin{proof}
	Since $\widetilde{X}$ is nonsingular, $\Pic(\widetilde{X})$ and $\Cl(\widetilde{X})$ coincides. But the resolution $\widetilde{X}\to X$ is small, hence the proper transform is the bijection between Weyl divisors on $\widetilde{X}$ and $X$, i.e. $\Cl(\widetilde{X})=\Cl(X)$. This implies $\rho(\widetilde{X})=\rk\Pic(\widetilde{X})=\rk\Cl(X)=v-3$. Therefore proposition \ref{prop: nodes on nodal 3fold} in our case is equivalent to the equality \ref{eq: nodes on nodal 3fold formula}.
\end{proof}

\begin{thm}[\cite{MM2,MM3}]\label{thm: same invariants - same class}
	Two smooth Fano threefolds $Y_1$, $Y_2$ with coincident sets of principal invariants $\rho$, $r$, $\deg$, $b$, $d$ lie in one deformation class. There are only 105 such classes\footnote{In the first version of \cite{MM2} one family $V_{4.13}$ was missing, it was corrected in 2003.}. They are	explicitly listed in \cite{MM2}, and nonempty.
\end{thm}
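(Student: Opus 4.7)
The plan is to stratify by Picard rank $\rho$ and invoke the structure theory of extremal contractions on smooth Fano threefolds. For $\rho(Y)=1$ I would appeal to the Fano--Iskovskikh--Shokurov classification: the index $r$ lies in $\{1,2,3,4\}$, and the pair $(r,\deg)$ already determines $Y$ up to deformation. Indeed, $r=4$ forces $Y\cong\PP^3$, $r=3$ forces $Y\cong Q$, $r=2$ leaves the five del Pezzo threefolds indexed by degree, and $r=1$ gives the ten principal-series families indexed by the genus $g=\deg(Y)/2+1$. Existence is proved using Shokurov's theorem that each such $Y$ contains a smooth line; Iskovskikh's double-projection method then identifies $Y$ birationally and anticanonically with a linear section of a fixed homogeneous ambient variety (a Grassmannian, an orthogonal Grassmannian, etc.), and uniqueness within each family follows from irreducibility of the parameter space of such linear sections.

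For $\rho(Y)\geq 2$ I would use Mori's cone theorem: since $-K_Y$ is ample, the Mori cone $\overline{NE}(Y)$ is rational polyhedral, and every extremal ray is contractible by a $K_Y$-negative extremal morphism $\phi\colon Y\to Y'$. Mori's classification of such contractions on smooth threefolds produces a short list of local types: divisorial contractions with one of five exceptional configurations (most notably the blow-up of a smooth curve in a smooth threefold), conic bundles, del Pezzo fibrations over $\PP^1$, and $\PP^1$-bundles over smooth surfaces. When $\rho\geq 2$ there are at least two distinct extremal rays, and analyzing a pair $(\phi_1,\phi_2)$ of extremal contractions rigidifies $Y$: either $Y$ is the blow-up of a smaller Fano, reducing to induction on $\rho$, or the compatible fibration structures determine $Y$ up to deformation from base and fibre data.

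The combinatorial core is then to enumerate all admissible Mori decompositions compatible with $Y$ being a smooth projective Fano, check nonemptiness by explicit construction in each case (blow-ups of $\PP^3$, $Q$ or other low-$\rho$ Fanos along smooth subvarieties; $\PP^1$-bundles over del Pezzo surfaces; divisors of small multidegree in products of projective spaces and Grassmannians; etc.), and verify that in each case the principal invariants $(\rho,r,\deg,b,d)$ can be read off from the combinatorial data and that they distinguish different types. Summing contributions over $\rho=1,\ldots,10$ yields exactly $105$ families, tabulated in Mori--Mukai once the missing entry $V_{4.13}$ is restored, as flagged in the footnote.

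The principal obstacle is the simultaneous two-ray analysis for $\rho=2,3$: one must constrain both contractions at once and rule out numerical combinations unrealizable by a smooth projective Fano (via positivity of the discriminant curve of a conic bundle, Hodge-index inequalities on the base of a $\PP^1$-bundle, and adjunction on exceptional divisors), then show that each surviving combinatorial type produces a connected nonempty moduli. Beyond $\rho=3$ the analysis becomes cleaner, reducing inductively to $\PP^1$-bundles over low-Picard-rank Fano surfaces, and for the top Picard numbers to $\PP^1$-bundles over products of projective lines. The discrete invariants $b$ and $d$ act as refinements separating families not distinguished by $(\rho,r,\deg)$ alone --- for instance within several $\rho=2$ cases --- so verifying that both are genuinely computable from the Mori structure, and that the resulting lists are pairwise distinct, is a nontrivial bookkeeping step that must accompany the geometric case analysis.
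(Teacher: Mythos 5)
This statement is not proved in the paper at all: it is imported verbatim from Mori--Mukai (and, for $\rho=1$, ultimately from Iskovskikh--Shokurov), and the paper uses it as a black box. Your sketch does correctly identify the architecture of the actual proofs in those references --- double projection from a line for $\rho=1$, Mori's classification of $K$-negative extremal contractions and the two-ray analysis for $\rho\geq 2$, induction on $\rho$ via blow-downs --- so as a summary of where the theorem comes from it is accurate. But judged as a proof it is a table of contents for several hundred pages, and two of the places where you wave your hands are exactly where the real difficulties sit.

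First, for $\rho=1$ the claim that $(r,\deg)$ ``already determines $Y$ up to deformation'' is not a formality: e.g.\ $(r,\deg)=(1,4)$ is realized both by quartics in $\PP^4$ and by double covers of the quadric branched in degree $8$, and $(r,\deg)=(1,2)$ and $(1,6)$ have analogous hyperelliptic/trigonal anticanonical models; showing these coalesce into a single deformation class, and ruling out genus $11$ altogether, is a substantial part of Iskovskikh's work that your outline silently assumes. Second, the portion of the theorem that this paper actually leans on is the separation statement: that the quintuple $(\rho,r,\deg,b,d)$ distinguishes all $105$ classes. You defer this to ``nontrivial bookkeeping,'' but it is not automatic --- the paper itself points out that $(\rho,r,\deg,b)$ alone fails to separate $19$ of the $105$ families, and the discriminant $d$ is precisely the extra invariant computed in \cite{MM3} (Proposition 7.35, reproduced in Table \ref{table 1}) to break those ties. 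Without verifying that $d$ takes distinct values on each ambiguous group (e.g.\ $-24$ vs.\ $-21$ for $V_{2.22}$ vs.\ $V_{2.24}$), the theorem as stated is not established. So: your route is the right one in outline and is essentially the route of the cited sources, but the proposal leaves open both the hardest existence/uniqueness steps at $\rho=1$ and the separation-by-invariants claim that is the theorem's actual payload here.
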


Let us say that smooth Fano threefold $Y$ is determined by its invariants ($\rho,\ r,\ \deg,\ b$), if for any smooth Fano threefold $Y'$ equalities $\rho(Y')=\rho(Y),\ \rho(Y')=\rho(Y),\ \deg(Y')=\deg(Y),\ b(Y')=b(Y)$ imply that $Y$ and $Y'$ lie in one deformation class. According to \cite{MM3}, only 19 of 105 families of smooth Fano threefolds are not determined by invariants $\rho,\ r,\ \deg,\ b$.

\begin{lem}\label{lem: existing of sm. for nodal Fanos}
	For any nodal Fano threefold $X$ there exists only one (up to deformations) smooth Fano $Y$, such that $Y$ is a smoothing of $X$.
\end{lem}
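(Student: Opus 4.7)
\bigskip

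\noindent\textbf{Proof proposal.} The plan is to show that every smoothing $Y$ of the nodal Fano threefold $X$ has the same quintuple of principal invariants $(\rho,r,\deg,b,d)$, and then invoke Theorem~\ref{thm: same invariants - same class} to conclude that all such $Y$ lie in one deformation class.

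First, I would invoke Namikawa's theorem~\ref{thm: Namikawa} together with Theorem~\ref{thm: JR} (or, equivalently, the corollary right after it) to guarantee that a smoothing $\pi:\XX\to\Delta$ exists, that $Y=X_{t\ne 0}$ is smooth Fano, and that the smoothing is automatically small. By Corollary~\ref{cor: 0} the four invariants $\rho,\deg,r,d$ of $Y$ coincide with those of $X$ and are therefore determined by $X$ alone. In particular these four numbers are the same for \emph{any} smoothing of $X$.

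Next, I would treat the remaining invariant $b(Y)=\tfrac{1}{2}\dim H^3(Y,\ZZ)$. Choose an analytic small crepant resolution $\widetilde X\to X$ (nodes always admit such resolutions analytically, and since the ensuing argument is purely topological this suffices — this is the one place where I expect to pause and justify carefully). Proposition~\ref{prop: nodes on nodal 3fold} then gives
\[
b(Y)=p(X)+b(\widetilde X)+\rho(Y)-\rho(\widetilde X),
\]
where $p(X)$ is the number of nodes of $X$ and the numbers $b(\widetilde X)$, $\rho(\widetilde X)$ are topological invariants of $\widetilde X$, hence of $X$. Since $\rho(Y)=\rho(X)$ by the previous paragraph, the right-hand side depends only on $X$, so $b(Y)$ is the same for every smoothing.

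Finally, with all five principal invariants $(\rho,r,\deg,b,d)$ of $Y$ determined by $X$, Theorem~\ref{thm: same invariants - same class} implies that any two smoothings $Y_1,Y_2$ of $X$ lie in the same deformation class, which is exactly the claim of the lemma. The main potential obstacle is the appeal to Proposition~\ref{prop: nodes on nodal 3fold}: it requires a small crepant resolution of $X$, which need not exist in the algebraic category for a general (non-toric) nodal Fano. However, an analytic small resolution of an ordinary double point always exists, and the Clemens-type Euler characteristic comparison used in the proof of Proposition~\ref{prop: nodes on nodal 3fold} is insensitive to the algebraic/analytic distinction, so the argument goes through.
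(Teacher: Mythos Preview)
Your proposal is correct and follows essentially the same route as the paper: establish existence of a smoothing, show that all five principal invariants of $Y$ are computable from $X$ alone (via Corollary~\ref{cor: 0} for $\rho,r,\deg,d$ and the conifold Euler-characteristic count for $b$), and then invoke Theorem~\ref{thm: same invariants - same class}. The only cosmetic differences are that the paper cites Friedman's Theorem~\ref{thm: Friedman} rather than Namikawa's for existence, and points to the toric-specialized Proposition~\ref{prop: b p rho formula} alongside the general Proposition~\ref{prop: nodes on nodal 3fold} for the computation of $b$; your explicit remark that an \emph{analytic} small resolution suffices for the Euler-number comparison is a welcome clarification that the paper leaves implicit.
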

\begin{proof}
	$X$ has a smoothing --- a smooth Fano variety $Y$ (see \ref{thm: Friedman}). Principal invariants of $Y$ (see \ref{def: principal invariants}) are explicitly computable from invariants of $X$ (see \ref{cor: 0}, \ref{prop: b p rho formula}). Deformation class of $Y$ is uniquely determined by its principal invariants \ref{thm: same invariants - same class}.
\end{proof}

\begin{cor}\label{cor: degeneration inv crit}
	Suppose $Y$ is determined by ($\rho$, $r$, $\deg$, $b$). Then nodal Fano threefold $X$ is a degeneration of $Y$ if and only if $\rho(X)=\rho,\ r(X)=r,\ \deg(X)=\deg,\ b(X)=b$. If $Y$ is not determined by $(\rho$, $r$, $\deg$, $b$), then $X$ is a degeneration of $Y$ if and only if $\rho(X)=\rho,\ r(X)=r,\ \deg(X)=\deg,\ b(X)=b,\ d(X)=d$.
\end{cor}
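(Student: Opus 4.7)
The plan is to prove both directions of the equivalence by combining constancy of the principal invariants under smoothing with the deformation-theoretic classification Theorem~\ref{thm: same invariants - same class}.

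For necessity, suppose $X$ is a nodal degeneration of $Y$, so $Y$ is a smooth fibre of a smoothing family $\pi:\XX\to\Delta$ with $X_0=X$. Corollary~\ref{cor: 0} gives at once $\rho(X)=\rho(Y)$, $r(X)=r(Y)$, $\deg(X)=\deg(Y)$, and $d(X)=d(Y)$. The remaining equality $b(X)=b(Y)$ is the only step not already formally packaged: interpreting $b(X)=\tfrac12\dim H^3(X,\ZZ)$ for the nodal central fibre and tracing the Clemens-type Euler-characteristic comparison from the proof of Proposition~\ref{prop: nodes on nodal 3fold} (together with the homotopy equivalence $\XX\simeq X_0$ of Corollary~\ref{cor: 1}) shows that $b(X)$ does coincide with $b(Y)$.

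For sufficiency, suppose the invariants of $X$ and $Y$ agree as in the statement. I would apply Theorem~\ref{thm: Namikawa} to produce a smoothing $\pi:\XX\to\Delta$ of $X$ with smooth general fibre $Y'$, and then invoke Lemma~\ref{lem: existing of sm. for nodal Fanos}: the deformation class of $Y'$ is uniquely determined by $X$, and all of its principal invariants $(\rho,r,\deg,b,d)$ are computable from those of $X$ via Corollary~\ref{cor: 0} and Proposition~\ref{prop: nodes on nodal 3fold}. By hypothesis these coincide with the principal invariants of $Y$. So if $Y$ is already determined by $(\rho,r,\deg,b)$, matching those four numbers puts $Y'$ and $Y$ in the same deformation class; otherwise the additional agreement $d(Y')=d(X)=d(Y)$ is also available, and Theorem~\ref{thm: same invariants - same class} identifies the deformation classes of $Y$ and $Y'$. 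In either case $X$ is a degeneration of $Y$.

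The main obstacle is the $b$-equality in the necessity direction: the invariants $\rho$, $r$, $\deg$, $d$ are controlled by the Picard-lattice machinery together with the adjunction formula \ref{eq: K_{Xt}}, whereas $b$ genuinely jumps across typical degenerations, so one must actually use the local conifold topology (a node with link $S^2\times S^3$, smoothed by $D^3\times S^3$) from the proof of Proposition~\ref{prop: nodes on nodal 3fold}. The remaining case split between $Y$ being determined by $(\rho,r,\deg,b)$ versus needing $d$ in addition is then immediate from Theorem~\ref{thm: same invariants - same class}.
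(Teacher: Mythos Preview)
Your sufficiency argument is correct and is exactly how the paper derives the corollary from Lemma~\ref{lem: existing of sm. for nodal Fanos} together with Theorem~\ref{thm: same invariants - same class}. The problem is your necessity argument for the invariant $b$.

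You interpret $b(X)=\tfrac12\dim H^3(X,\ZZ)$ and claim this coincides with $b(Y)$. It does not. The homotopy equivalence $\XX\simeq X_0$ of Corollary~\ref{cor: 1} only identifies $H^3(\XX)$ with $H^3(X_0)$; it says nothing about $H^3(X_t)$ for $t\neq 0$. The Euler-characteristic comparison in the proof of Proposition~\ref{prop: nodes on nodal 3fold} actually gives $\chi(X_0)=\chi(Y)+p(X)$, not an equality. Concretely, for a nodal toric $X$ with small resolution $\widetilde{X}$ one finds (e.g.\ via the Leray spectral sequence for $\widetilde{X}\to X$) that $\dim H^3(X,\mathbb{Q})=p(X)+\rho(X)-\rho(\widetilde{X})=b(Y)$, so $\tfrac12\dim H^3(X)=\tfrac12 b(Y)$, which disagrees with $b(Y)$ whenever $b(Y)>0$ (for instance for the toric degeneration of $B_4$). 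You yourself observe that $b$ ``genuinely jumps across typical degenerations''; it jumps here too.

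The symbol $b(X)$ in the corollary is not the naive half Betti number of the singular fibre. In the paper's usage it is the quantity produced by Propositions~\ref{prop: nodes on nodal 3fold} and~\ref{prop: b p rho formula}, namely $b(X):=p(X)+b(\widetilde{X})+\rho(X)-\rho(\widetilde{X})$, which \emph{by construction} equals $b(Y)$ for any smoothing $Y$. With this reading the necessity direction is immediate from Corollary~\ref{cor: 0} (for $\rho,r,\deg,d$) and the definition of $b(X)$; no further conifold topology is required, and the corollary is then a direct restatement of Lemma~\ref{lem: existing of sm. for nodal Fanos}.
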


The proof of lemma \ref{lem: existing of sm. for nodal Fanos} works in higher generality --- not only in case of toric varieties, but for any nodal Fano threefolds (and also it is easy to generalize it to the case of Fano threefolds with Gorenstein terminal singularities). In the next part of the paper we restrict ourselves to the case of toric varieties $X$\footnote{For simplicity of computations, and applications (see \ref{sec:applications})}.

There is an effective algorithm describing all the reflexive polytopes (i.e. Gorenstein toric Fano varieties) in any fixed dimension \cite{KS}. Number of such polytopes grows fast enough: there are 16 polygons, 4319 polytopes in 3-dimensional space, and 473800776 4-dimensional polytopes.

We are interested in the particular case of nodal toric Fano threefolds.
We used PALP software package \cite{PALP,KS} to form a list of such varieties.
There are 100 of them, 18 are smooth and are not deformations of other Fano manifolds (theorem \ref{thm: same invariants - same class}).
For non-smooth cases Picard number is at most $4$.
All these varieties are listed in the table of Section~\ref{sec:answer}\footnote{
An explicit description of all nodal toric Fano threefolds is given in \cite{Ni}.
All terminal toric Fano threefolds are classified in\cite{Ka}.
All Gorenstein toric Fano threefolds are classified in \cite{KS}.
}.

So let us compute invariants of the smoothing $Y$ of toric nodal Fano $X$.

Let $\pi: \widetilde{X}\to X$ be some small crepant resolution of $X$, and $p(X)$ be a number of nodes on $X$.

\begin{proof}[Proof of \ref{thm: admitting toric degeneration}] Assume smooth Fano threefold $Y$ is degenerated to $X$.	As shown in \ref{cor: 2}, varieties $X$ and $Y$ have the same Picard number, index, anticanonical degree	and invariant $d$. Denote them by
\begin{gather*}
\rho(X)=\rho(Y)=\rho,\\
r(X)=r(Y)=r,\\
(-K_X)^3=(-K_Y)^3=\deg,\\
d(X)=d(Y)=d.
\end{gather*}
Since $\widetilde{X}$ is toric, all its odd cohomology vanish: $H^3(\widetilde{X},\mathbb{Q})=0$. This implies (see \ref{prop: nodes on nodal 3fold}, \ref{eq: nodes on nodal 3fold formula}, \ref{prop: b p rho formula}):
\[
b(Y)=p(X)+\rho(X)-\rho(\widetilde{X}).
\]
Put $b=p(X)+\rho(X)-\rho(\widetilde{X})$.

What is left to do is to compute invariants $\rho,\ r,\ \deg, b,\ d$ of $X$ (this is done in section 4), and pick up a unique family of smooth varieties $Y$ with invariants $\rho(Y)=\rho,\ r(Y)=r,\ \deg(Y)=\deg,\ b(Y)=b,\ d(Y)=f$, in the table of \cite{MM1}.
\end{proof}

The remaining statements in this chapter serve to simplify the computations. Picard number of nodal toric Fano threefold is either 1, 2, 3 or 4 (see \cite{Ni} and table in \ref{sec:answer}). Hence smooth non-toric Fano varieties $\rho\geq 5$ (i.e. non-toric variety of degree 28 with $\rho=5$ and products $\PP^1\times S_{d=11-\rho}$ of the line $\PP^1$ with del Pezzo surface $S_d$ of degree $d\leq 5$) has no small toric deformations.

In 55 of 82 cases of singular $X$ the smoothing $Y$ is determined by its invariants $(\rho,\ b,\ r,\ \deg)(Y)=(\rho,\ b,\ r,\ \deg)(X)$. In these cases the routine computation of invariant $d(X)$ may be omitted.

There are eight exceptional sets of invariants $(\rho,\ b,\ r,\ \deg)$ corresponding to 17 families of Fano varieties
listed in the following table:

\begin{center}
	\begin{longtable}{|c|c|c|c|}
		\caption{}\label{table 1}\\
		\hline
		$\rho$ & $\deg$ & $b,\ r$ & \text{smooth} $Y$ \\
		\hline
		\endfirsthead
		\multicolumn{4}{c}%
		{\tablename\ \thetable\ \textit{}} \\
		\hline
		$\rho$ & $\deg$ & $b,\ r$ & \text{Smooth} $Y$ \\
		\hline
		\endhead
		\hline \multicolumn{4}{r}{\textit{}} \\
		\endfoot
		\hline
		\endlastfoot
		2 & 30 & 0,1 & $V_{2.22}[-24],V_{2.24}[-21]$ \\ 
		2 & 46 & 0,1 & $V_{2.30}[-12],V_{2.31}[-13]$ \\ 
		3 & 36 & 0,1 & $V_{3.17}[28],V_{3.18}[26]$ \\ 
		3 & 38 & 0,1 & $V_{3.19}[24],V_{3.20}[28],V_{3.21}[22]$ \\ 
		3 & 42 & 0,1 & $V_{3.23}[20],V_{3.24}[22]$ \\ 
		4 & 32 & 0,1 & $V_{4.4}[-40],V_{4.5}[-39]$ \\ 
		2 & 54 & 0,2 & $V_{2.33},V_{2.34}$ \\ 
		3 & 48 & 0,2 & $V_{3.27},V_{3.28}$ \\ 
	\end{longtable}
\end{center}

\begin{rem}
	Smooth varieties $V_{2.33},\ V_{2.34},\ V_{3.27},\ V_{3.28}$ are toric.
\end{rem}
\begin{rem}
	In table \ref{table 1} the number in brackets after smooth Fano $Y$ is its invariant $d(Y)$ (see \cite[Proposition 7.35]{MM3}).
\end{rem}

\section{Computation of discriminants}\label{sec:computation}

\begin{thm}[see e.g. \cite{Ba93}]
Let $X$ be nonsingular and proper (probably not projective) toric variety.
Cohomology ring $H^\bullet(X,\mathbb{Q})$ is generated by classes of invariant divisors $D_{\rho_i}$.
The relations in this ring are generated by the so-called Stanley--Reisner relations:
for all $J\subset\Sigma^{(1)}$, not contained in any face $\Delta$ one has
	\[
	\prod_{j\in J\subset\Sigma^{(1)}}D_{\rho_j}=0,
	\]
	and relations implied by the triviality of principal divisors, i.e. for all $m\in M$
	\[
	\sum_i\langle m,\rho_i\rangle D_{\rho_i}=0.
	\]
\end{thm}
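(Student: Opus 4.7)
The plan is to establish two things: that $H^\bullet(X,\mathbb{Q})$ is generated by the classes $D_{\rho_i}$, and that the stated relations exhaust all relations. Throughout, write $n = \dim X$ and $\Sigma^{(k)}$ for the set of $k$-dimensional cones.

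For generation, I would use the Białynicki--Birula decomposition. Pick a one-parameter subgroup $\lambda \colon \CC^*\to T$ lying in the interior of some maximal cone (so that $X^{\CC^*}$ consists exactly of the torus-fixed points $x_\sigma$ indexed by $\sigma\in\Sigma^{(n)}$). The flow cells
\[
C_\sigma = \bigl\{x\in X : \lim_{t\to 0}\lambda(t)\cdot x = x_\sigma\bigr\}
\]
are affine spaces and partition $X$. Their closures are closures of torus orbits, which by the orbit--cone correspondence are intersections of the invariant divisors $D_{\rho_i}$. Because the cells have even real dimension, $H^\bullet(X,\mathbb{Z})$ is torsion-free, concentrated in even degrees, and generated by the classes of these closures -- hence by the $D_{\rho_i}$.

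Next, both families of relations hold on the nose: if $J\subset\Sigma^{(1)}$ is not contained in any cone, then by the orbit--cone correspondence the invariant subvarieties $V(\rho_j)$, $j\in J$, have empty common intersection in $X$, so $\prod_{j\in J} D_{\rho_j}=0$ in $H^\bullet(X,\mathbb{Q})$; and for $m\in M$, the divisor $\sum_i\langle m,\rho_i\rangle D_{\rho_i}$ is the divisor of the character $\chi^m$, hence linearly equivalent to zero and so trivial in cohomology. Thus the surjection from the candidate ring
\[
R \;=\; \mathbb{Q}[D_{\rho_i}]\Big/\bigl(\text{Stanley--Reisner relations} \;+\; \textstyle\sum_i\langle m,\rho_i\rangle D_{\rho_i},\ m\in M\bigr)
\]
onto $H^\bullet(X,\mathbb{Q})$ is well-defined.

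The main obstacle is to show this surjection is an isomorphism, i.e.\ to match Hilbert series. I would compute the Betti numbers of $X$ from the cellular decomposition: if $\sigma\in\Sigma^{(n)}$ and $k(\sigma)$ denotes the number of rays $\rho_i\subset\sigma$ on which $\lambda$ acts with negative weight (the ``descending'' rays), then $\dim_\CC C_\sigma = n - k(\sigma)$, so
\[
b_{2k}(X) \;=\; \#\{\sigma\in\Sigma^{(n)} : k(\sigma) = n-k\} \;=\; h_k,
\]
the $k$-th entry of the $h$-vector of $\Sigma$. On the algebraic side, smoothness of $X$ means the fan is simplicial and unimodular, and a basis $m_1,\dots,m_n$ of $M$ gives $n$ linear elements $\theta_j = \sum_i\langle m_j,\rho_i\rangle D_{\rho_i}$ in the Stanley--Reisner ring $\mathrm{SR}(\Sigma) = \mathbb{Q}[D_{\rho_i}]/(\text{SR relations})$. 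Because $X$ is complete and simplicial, the $\theta_j$ form a linear system of parameters for $\mathrm{SR}(\Sigma)$ (Cohen--Macaulayness of the Stanley--Reisner ring of a complete simplicial fan), and the standard combinatorial identity gives
\[
\dim_\mathbb{Q} R_k \;=\; h_k.
\]
Matching the two counts shows $\dim_\mathbb{Q} R_k = b_{2k}(X)$, so the surjection $R \twoheadrightarrow H^\bullet(X,\mathbb{Q})$ is an isomorphism, proving the theorem. The technical heart is this Hilbert-series comparison, for which I would cite or reproduce the standard fact that the linear forms coming from $M$ form an l.s.o.p.\ on $\mathrm{SR}(\Sigma)$ for any complete simplicial fan.
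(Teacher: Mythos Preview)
The paper does not prove this theorem at all: it is stated with a reference to \cite{Ba93} and followed only by a one-sentence geometric paraphrase of what the Stanley--Reisner relations mean. There is no argument in the paper to compare your proof against.

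Your proof is the standard Danilov--Jurkiewicz argument and is essentially correct: exhibit the surjection from the candidate ring $R$ onto $H^\bullet(X,\mathbb{Q})$, then match graded dimensions via the $h$-vector, using that the linear forms $\theta_j=\sum_i\langle m_j,\rho_i\rangle D_{\rho_i}$ form a linear system of parameters on the Cohen--Macaulay Stanley--Reisner ring of a complete simplicial fan. One point deserves a word of care. You invoke the Bia\l{}ynicki--Birula decomposition to conclude that the cell closures give an additive basis of $H^\bullet(X,\mathbb{Z})$; for a smooth \emph{proper but possibly non-projective} toric variety the BB decomposition is not automatically filtrable, so the step ``cells are affine spaces $\Rightarrow$ their closures are a basis'' needs justification. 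You can either (a) note that for a complete fan a generic one-parameter subgroup induces a shelling order on maximal cones, which makes the decomposition filtrable, or (b) bypass the issue by computing Betti numbers via additivity of the virtual Poincar\'e polynomial (Hodge--Deligne) over the affine paving, which gives $b_{2k}=h_k$ directly without needing a CW structure. Either route closes the gap, and your l.s.o.p.\ comparison then finishes the proof.
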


This means that in the cohomology ring of a smooth toric variety all the relations are generated by naive ones: intersection of $k$ different divisors is empty if the corresponding 1-dimensional faces are not contained in one $k$-dimensional face $\sigma$. If they are contained, then the corresponding divisors intersect transversely in $(d-k)$-dimensional orbit corresponding to the face $\sigma$.

\begin{lem}\label{lem: equations}
	Let $X_\Sigma$ be a smooth toric $n$-fold. Consider a homogeneous system of linear equations
	\begin{gather*}
	x_{j_1\ldots j_n}=0,\ \text{if}\ \{\rho_{j_1},\ldots, \rho_{j_n}\}\ \text{is not a cone in}\ \Sigma,\\
	\sum\langle m,\rho_j\rangle x_{j_1\ldots j_{i-1}jj_{i+1}\ldots j_n}=0
	\end{gather*}
	This system has a unique solution up to rescaling. Choose a unique solution that satisfy $x_{j_1\ldots j_n}=1$,
	if $\{\rho_{j_1}\ldots\rho_{j_n}\}$ is a cone in $\Sigma$. Then the numbers $x_{j_1\ldots j_n}$ are equal to the intersection numbers of	divisors $D_{j_1}\cdot\ldots\cdot D_{j_n}$ on $X_\Sigma$.
\end{lem}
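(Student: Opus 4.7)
The plan has two parts: (existence) exhibit a solution via top intersection numbers, and (uniqueness) show that the solution space is one-dimensional, so that the normalization on maximal cones pins the solution down.

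For existence I will set $x_{j_1\ldots j_n}:=\deg(D_{j_1}\cdot\ldots\cdot D_{j_n})\in\mathbb Q$, the top intersection number computed in $H^\bullet(X_\Sigma,\mathbb Q)$ and evaluated on the fundamental class, and check all three conditions using the preceding theorem. First, if $\{\rho_{j_1},\ldots,\rho_{j_n}\}$ is not contained in any cone of $\Sigma$, then the Stanley--Reisner relation already forces $\prod_k D_{j_k}=0$ in $H^\bullet(X_\Sigma,\mathbb Q)$. Second, for each $m\in M$ the expression $\sum_j\langle m,\rho_j\rangle D_j=\operatorname{div}(\chi^m)$ is a principal divisor, hence zero in $\Pic(X_\Sigma)$; cupping with the remaining $n-1$ invariant divisors yields the $i$-th linear equation. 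Third, when $\{\rho_{j_1},\ldots,\rho_{j_n}\}$ spans a maximal cone $\sigma\in\Sigma$, smoothness of $X_\Sigma$ forces $\sigma$ to be smooth, so the $\rho_{j_k}$ form a $\mathbb Z$-basis of $N$; the divisors $D_{j_k}$ then meet transversally at the unique torus-fixed point $x_\sigma$, and their intersection number is exactly $1$.

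For uniqueness I invoke the Jurkiewicz--Danilov presentation from the preceding theorem, together with the standard fact that $H^{2n}(X_\Sigma,\mathbb Q)\cong\mathbb Q$ for any proper smooth toric $n$-fold. A solution of the system defines a multilinear form on $V:=\mathbb Q^{\Sigma(1)}/(\text{linear relations})$ that vanishes on Stanley--Reisner tuples; symmetrizing in the $n$ indices (the system is permutation-invariant) produces a linear functional on the degree-$n$ part of $\mathbb Q[D_1,\ldots,D_s]/(I_{\mathrm{SR}}+I_L)$, which by the preceding theorem equals $H^{2n}(X_\Sigma,\mathbb Q)\cong\mathbb Q$. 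Hence every (symmetric) solution is a scalar multiple of the intersection-number solution, and the normalization $x=1$ on every maximal-cone tuple pins the scalar down to $1$.

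The principal technical point is the passage from the combinatorial system to 1-dimensionality of its solution space, and this is exactly where the Jurkiewicz--Danilov presentation does the real work: it guarantees that Stanley--Reisner and principal-divisor relations generate \emph{all} relations in $H^\bullet(X_\Sigma,\mathbb Q)$, so the system encodes no hidden ambiguity beyond an overall scalar. Once this structural input is available, the remaining verifications (cone-by-cone transversality at fixed points, triviality of $\operatorname{div}(\chi^m)$, and proper-toric one-dimensionality of $H^{2n}$) are bookkeeping.
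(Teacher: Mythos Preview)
The paper does not give an explicit proof of this lemma; it is stated immediately after the Jurkiewicz--Danilov presentation of $H^\bullet(X_\Sigma,\mathbb Q)$ and is meant to be read as a direct computational reformulation of that theorem. Your argument makes this derivation explicit and follows exactly the intended route: existence by verifying that the intersection numbers satisfy the Stanley--Reisner, linear-equivalence, and transversality conditions, and uniqueness by identifying the space of solutions with the dual of $H^{2n}(X_\Sigma,\mathbb Q)\cong\mathbb Q$.

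One small point on the uniqueness half deserves care. As written, your passage through $\mathbb Q[D_1,\ldots,D_s]/(I_{\mathrm{SR}}+I_L)$ only shows that the \emph{symmetric} solutions form a one-dimensional space. If the unknowns $x_{j_1\ldots j_n}$ are indexed by ordered tuples rather than by multisets, the full solution space can be strictly larger: already for $X_\Sigma=\mathbb P^1\times\mathbb P^1$ with rays $\rho_1=-\rho_2=e_1$, $\rho_3=-\rho_4=e_2$, the system leaves $x_{13}$ and $x_{31}$ independent, so the homogeneous solution space is two-dimensional and the phrase ``unique up to rescaling'' is literally false in that reading. The most natural reading of the lemma (and the one the paper clearly intends, since intersection numbers are symmetric) is that the $x_{j_1\ldots j_n}$ are symmetric in their indices, and under that reading your proof is complete. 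If one insists on ordered tuples, the normalization $x_{j_1\ldots j_n}=1$ on \emph{every ordering} of every maximal cone still forces uniqueness, but this last step needs one more line: once $y:=x-x_{\mathrm{int}}$ vanishes on all orderings of maximal cones, a short downward induction on the number of distinct indices (using the linear relations in one slot and the Stanley--Reisner vanishing in the others) shows $y\equiv 0$. Either way, the substance of your argument is correct and matches what the paper leaves implicit.
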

\begin{prop}\label{prop: equation}
	For Weyl divisor $\sum a_\rho D_\rho$ the condition of local principality in ordinary double point on toric threefold is the following --- sum of coefficients at invariant irreducible divisors corresponding to the vertices of the diagonal $\rho_A\rho_C$ of quadrangle $\rho_A\rho_B\rho_C\rho_D$ is equal to the sum at
	the vertices of $\rho_B\rho_D$:
	\[
	a_{\rho_A}+a_{\rho_C}=a_{\rho_B}+a_{\rho_D}.
	\]
\end{prop}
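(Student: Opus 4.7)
\medskip

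The plan is to reduce the proposition to the standard toric criterion for a Weil divisor to be Cartier on an affine chart, and then exploit the very restrictive combinatorics of a node to turn the four linear conditions into a single scalar equality on the coefficients $a_\rho$.

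First I would recall the following textbook fact (cf.\ \cite{Fu93}): if $\sigma$ is a cone in $\Sigma$ and $D = \sum_\rho a_\rho D_\rho$ is a $T$-invariant Weil divisor, then $D$ is principal on the affine chart $U_\sigma$ if and only if there exists an element $m_\sigma \in M$ such that $\langle m_\sigma,\rho\rangle = a_\rho$ for every ray $\rho$ of $\sigma$. In our situation the torus-fixed point corresponding to a quadrangular face $\rho_A\rho_B\rho_C\rho_D$ of the polytope $\Delta^\vee$ is the vertex of the three-dimensional cone $\sigma$ generated by the four primitive lattice vectors $\rho_A,\rho_B,\rho_C,\rho_D$, and local principality at the node is precisely local principality on $U_\sigma$.

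Next I would use the Gorenstein/node structure. By the Gorenstein hypothesis, the four rays $\rho_A,\rho_B,\rho_C,\rho_D$ lie in a common affine hyperplane $\{\langle u,\cdot\rangle = 1\} \subset N_{\mathbb{Q}}$, and the fact that the singularity is an ordinary double point (and not a worse terminal Gorenstein point) means that on this hyperplane the four lattice points form a lattice parallelogram whose diagonals are $\rho_A\rho_C$ and $\rho_B\rho_D$; in other words
\[
\rho_A + \rho_C = \rho_B + \rho_D
\]
as elements of $N$. Applying any $m \in M$ to this identity gives
\[
\langle m,\rho_A\rangle + \langle m,\rho_C\rangle = \langle m,\rho_B\rangle + \langle m,\rho_D\rangle,
\]
so if the four conditions $\langle m_\sigma,\rho_i\rangle = a_{\rho_i}$ are simultaneously solvable, necessarily $a_{\rho_A}+a_{\rho_C}=a_{\rho_B}+a_{\rho_D}$.

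For the converse, assume the equality holds. I would pick three of the four rays, say $\rho_A,\rho_B,\rho_D$, that form a $\mathbb{Z}$-basis of $N$; this is possible precisely because the node $\{xy=zt\}$ corresponds combinatorially to the minimal empty lattice parallelogram, so any three vertices of the quadrangle generate $N$. The three equations $\langle m,\rho_A\rangle = a_{\rho_A},\ \langle m,\rho_B\rangle=a_{\rho_B},\ \langle m,\rho_D\rangle=a_{\rho_D}$ then have a unique solution $m\in M$, and the relation $\rho_C = \rho_A + \rho_C - \rho_B + \rho_B - \rho_D + \rho_D = \rho_B+\rho_D-\rho_A$ combined with the assumed equality $a_{\rho_A}+a_{\rho_C}=a_{\rho_B}+a_{\rho_D}$ forces $\langle m,\rho_C\rangle = a_{\rho_C}$ automatically. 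Hence $D$ is principal on $U_\sigma$.

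The only non-routine step is the assertion that any three of the four rays of a node cone form a $\mathbb{Z}$-basis of $N$; this is really the content of the classification of three-dimensional terminal Gorenstein toric singularities quoted right after Definition~\ref{def: small degeneration}, so it can be invoked as a known fact rather than re-proved here.
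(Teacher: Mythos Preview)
Your argument is correct. The paper actually states this proposition without proof, treating it as a standard toric fact, so there is nothing to compare against; what you have written is exactly the routine verification one would supply. The key inputs you identify --- the Cartier criterion $\langle m_\sigma,\rho\rangle=a_\rho$ from \cite{Fu93}, the parallelogram relation $\rho_A+\rho_C=\rho_B+\rho_D$ coming from the explicit description of the node cone, and the fact that any three of the four rays form a lattice basis of $N$ --- are precisely the right ones, and the paper's reference to \cite{Ni} after Definition~\ref{def: small degeneration} justifies the last of these.

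One cosmetic point: the displayed chain $\rho_C = \rho_A + \rho_C - \rho_B + \rho_B - \rho_D + \rho_D = \rho_B+\rho_D-\rho_A$ is a tautology in the middle; you presumably meant simply to rearrange $\rho_A+\rho_C=\rho_B+\rho_D$ to $\rho_C=\rho_B+\rho_D-\rho_A$. This does not affect the validity of the proof.
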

\begin{lem}\label{lem: exact seq for Pic}
	Let $X$ be a nodal toric Fano threefold. Then $\Pic(X)$ is determined from the exact sequence
	\[
	0\longrightarrow\Pic(X)\longrightarrow\Pic(\widetilde{X})\overset{\phi}{\longrightarrow}\oplus_{ABCD}\ZZ,
	\]
	where the sum is taken over all basic quadrangles $\rho_A\rho_B\rho_C\rho_D$ for $X$, $\phi=\oplus_{ABCD}\phi_{ABCD}$, and $\phi_{ABCD}(\sum a_\rho D_\rho)=(a_{\rho_A}-a_{\rho_B}+a_{\rho_C}-a_{\rho_D})$.
\end{lem}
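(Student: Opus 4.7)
The plan is to identify $\Pic(X)$ inside $\Cl(X)$ as the subgroup of divisor classes that are Cartier at every point, then transport this picture across the small crepant resolution $\pi:\widetilde{X}\to X$ to a subgroup of $\Pic(\widetilde{X})$, and finally read off the local Cartier conditions at the nodes from Proposition~\ref{prop: equation}.

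First I would observe that since $\pi$ is small it contracts no divisors, so the proper transform establishes an isomorphism $\Cl(X)\cong\Cl(\widetilde{X})$, as already invoked in the proof of Proposition~\ref{prop: b p rho formula}. Because $\widetilde{X}$ is a smooth (hence locally factorial) toric threefold, $\Cl(\widetilde{X})=\Pic(\widetilde{X})$; both groups are generated by the invariant prime divisors $D_\rho$ modulo the relations $\sum_\rho\langle m,\rho\rangle D_\rho=0$ for $m\in M$. Via this identification, a Weil divisor $\sum a_\rho D_\rho$ on $X$ corresponds to the divisor with the same coefficients on $\widetilde{X}$, and the natural map $\Pic(X)\to\Pic(\widetilde{X})$ is simply the composition $\Pic(X)\hookrightarrow\Cl(X)\cong\Pic(\widetilde{X})$.

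Second, $\Pic(X)\subset\Cl(X)$ is the subgroup of classes locally principal at every closed point. On the smooth locus this is automatic; at each node, corresponding to a basic quadrangle $\rho_A\rho_B\rho_C\rho_D$ of the polytope of $X$, Proposition~\ref{prop: equation} rephrases local Cartierness of $\sum a_\rho D_\rho$ as the single linear equation $a_{\rho_A}+a_{\rho_C}=a_{\rho_B}+a_{\rho_D}$, which is precisely $\phi_{ABCD}\bigl(\sum a_\rho D_\rho\bigr)=0$. Collecting these conditions over all nodes identifies $\Pic(X)$ with $\Ker\phi$ inside $\Pic(\widetilde{X})$, and injectivity of the map $\Pic(X)\to\Pic(\widetilde{X})$ is automatic from the fact that it has just been realised as a subgroup inclusion.

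I do not expect any serious obstacle: the geometric input is already packaged in Proposition~\ref{prop: equation} and in the standard behaviour of class groups under small resolutions. The only care needed is to fix a cyclic ordering $A,B,C,D$ of each quadrangle's vertices so that the diagonals $AC$ and $BD$ are correctly distinguished in the definition of $\phi_{ABCD}$; once this convention is in place, the exact sequence follows directly.
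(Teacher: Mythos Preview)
Your argument is correct and is precisely the intended one. In the paper the lemma is stated without proof, immediately after Proposition~\ref{prop: equation}, and is treated as a direct repackaging of that proposition together with the standard identifications $\Cl(X)\cong\Cl(\widetilde{X})=\Pic(\widetilde{X})$ already used in the proof of Proposition~\ref{prop: b p rho formula}. You have simply written out these implicit steps.

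One small point worth adding for completeness: you should check that each $\phi_{ABCD}$ descends from the free abelian group on the $D_\rho$ to $\Pic(\widetilde{X})$, i.e.\ that it kills every principal divisor $\sum_\rho\langle m,\rho\rangle D_\rho$. This holds because the four primitive generators of a basic quadrangle satisfy $\rho_A-\rho_B+\rho_C-\rho_D=0$ (this affine relation is exactly what forces the cone over the quadrangle to be a node rather than smooth), so $\langle m,\rho_A\rangle-\langle m,\rho_B\rangle+\langle m,\rho_C\rangle-\langle m,\rho_D\rangle=0$ for every $m\in M$. Without this observation the map $\phi$ in the displayed sequence is not a priori well-defined on $\Pic(\widetilde{X})$.
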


\begin{rem}\label{rem: computation}
	By virtue of lemmas \ref{lem: equations} and \ref{lem: exact seq for Pic}, one may effectively compute the intersection theory	on $\Pic(X)$ for $\mathbb{Q}$-Gorenstein toric $X$, admitting a small resolution $f: \widetilde{X}\to X$ (e.g. all nodal threefolds $X$ satisfy this property). Self-intersection $D^n$ of Cartier divisor $D\in\Pic(X)$ is equal to intersection of its pullback $\widetilde{D}=f^*D$ to $\widetilde{X}$. Class group of Weyl divisors is invariant modulo small resolutions, divisor $\widetilde{D}$ is represented by the same Weyl divisor as $D$ (by	the pullback).
	
	Therefore to find the intersections on $\Pic(X)$ one need to solve two systems of linear equations:	one on intersection numbers $D_{i_1}\cdot\ldots\cdot D_{i_n}$ described in \ref{lem: equations}, and another one --- the equations \ref{prop: equation} cutting $\Pic(X)$ as a subgroup of $\Pic(\widetilde{X})$\footnote{The PARI/GP script realizing this algorithm is available at \url{http://www.mi.ras.ru/~galkin/work/NodalToric3foldPicard.gp}.}.
\end{rem}

\subsection*{Notation} Let $M$ be a integer matrix of size $3\times v$. Denote by $\Delta(M)$ the convex hull of columns of $M$. Assume $M$ is chosen in such a way that 0 is contained in the interior of $\Delta(M)$, and none of $M$'s columns lie in the convex hull of the others. By $\PP(M)$ denote the toric Fano variety corresponding to the polytope $\Delta(M)$. Let $D_i$ be invariant Weyl divisor corresponding to $i$th vertice of $\Delta(M)$, and $G_1,\ldots G_\rho$ be the generators of $\Pic(\PP(M))$.

In order to compute $d$, we find first all the intersection numbers of the elements in the base of $\Pic(\PP(M))$, and then compute the discriminant. We use \ref{rem: computation} for the computation of intersection numbers of divisors in $\Pic(\PP(M))$ --- compute the ring $H^{\bullet}(\widetilde{\PP}(M))$, intersections in Picard group $\Pic(\widetilde{\PP}(M))$ of small crepant resolution\footnote{We choose arbitrary maximal crepant resolution as explained in \ref{rem: computation}, the answer does not depend on the projectivity of the resolutions.} $\phi: \widetilde{\PP}(M)\to\PP(M)$, and then intersections in $\PP(M)$ is just the restriction from $\widetilde{\PP}(M)$.

As an example we produce this computation for case with invariants $(\rho=2,\ \deg=30,\ b=0)$\footnote{All the other cases are available at \url{http://www.mi.ras.ru/~galkin/work/NodalToric3foldPicard.pdf}.}.
\newline
\newline
{\bf Case 4.6} ($v=9,\ f=10$). 
\[
M=
\begin{pmatrix}
	1 & 0 & 0 & 0 & 0 & -1 & 1 & 1 & -1 \\
	0 & 1 & 0 & 0 & -1 & 0 & 1 & 0 & -1 \\
	0 & 0 & 1 & -1 & 0 & 0 & 0 & -1 & 1 
\end{pmatrix}
\]
$G_1=D_1+D_4+D_5+D_8,\ G_2=-D_1+D_6+D_9$.
\begin{gather*}
{\rm int}(aG_1+bG_2,\ aG_1+bG_2,\ aG_1+bG_2)=(aG_1+bG_2)^3=a^3+6ba^2-2b^3\\
-K=G_1+2G_2\\
d=-24
\end{gather*}
\newline
{\bf Case 4.7} ($v=10,\ f=11$). 
\[
M=
\begin{pmatrix}
1 & 0 & 0 & 0 & 0 & -1 & 0 & -1 & -1 & -1\\
0 & 1 & 0 & 0 & -1 & 0 & 1 & 1 & 0 &-1 \\
0 & 0 & 1 & -1 & 0 & 0 & -1 & 0 & 1 & 1
\end{pmatrix}
\]
$G_1=D_7+D_8+D_9,\ G_2=D_2+D_3+D_5-D_6+D_{10}$.
\begin{gather*}
(aG_1+bG_2)^3=-2a^3+6ba^2-3b^3\\
-K=3G_1+2G_2\\
d=-24
\end{gather*}
\newline
{\bf Case 4.8} ($v=9,\ f=10$). 
\[
M=
\begin{pmatrix}
1 & 0 & 0 & -1 & 0 & -1 & 1 & 0 & 1\\
0 & 1 & -1 & 0 & 0 & 1 & 1 & -1 & 0 \\
0 & 0 & 0 & 0 & 1 & 0 & -1 & 1 & -1
\end{pmatrix}
\]
$G_1=-D_1+2D_3+D_4-D_7+D_8,\ G_2=D_1+D_7+D_9$.
\begin{gather*}
(aG_1+bG_2)^3=3ba^2+6b^2a\\
-K=G_1+G_2\\
d=-21
\end{gather*}

\section{The description of toric degenerations of smooth Fano threefolds}\label{sec:answer}

As mentioned in corollary \ref{cor: degeneration inv crit}, for the determination of all the possible types of toric degenerations of Fano threefolds $Y$ , we need to compute the invariants $\rho(X)$, $r(X)$, $\deg(X)$, $b(X)$ (and
sometimes $d(X)$) of all nodal toric Fano threefolds. For these computations we used the program\footnote{\url{http://www.mi.ras.ru/~galkin/work/NodalToric3foldPicard.gp}} based on algorithm described in \ref{lem: equations}, \ref{lem: exact seq for Pic}, \ref{rem: computation}. The results of these computations are exposed in the table \ref{table 2}.

First 4 columns list Fano threefolds $Y$ and its invariants computed in \cite{MM3}.

In 5th column we list the value of invariant $d(Y)$ for cases when $Y$ is not determined by $(\rho,\ r,\ \deg,\ b)$.

In 6th column we list main combinatorial invariants of toric $X$ (degeneration of $Y$) --- number of vertices, nodes and torus-fixed points.

In 7th column we list the number of toric degenerations $X$ of smooth $Y$ with invariants listed in 6th column.

\begin{rem}
	There is a linear relation \ref{prop: b p rho formula} between $\rho,\ b,\ v,\ p$:
	\[
	v-p=3+\rho-b.
	\]
\end{rem}
\begin{rem}
	Varieties $V_{2.34},\ V_{3.25},\ V_{3.26},\ V_{3.28},\ V_{4.9}$ are smooth toric varieties that admit degenerations
	to singular nodal toric varieties. The rest of smooth varieties listed in the table are non-toric.
\end{rem}
\begin{rem}
	Fano variety $V_{4.13}$ (of degree 26)\footnote{This threefold is missing in the original version of \cite{MM1}.} does not admit small toric degenerations.
\end{rem}

\begin{center}
	\begin{longtable}{|c|c|c|c||c|c|c|}
		\caption{}\label{table 2}\\
		\hline
		$V_{22}$ & 1 & 22 & 0 &  & (13,9,13) & 1 \\ \hline
		$B_4$ & 1 & 32 & 2 &  & (8,6,6) & 1 \\ \hline
		$B_5$ & 1 & 40 & 0 &  & (7,3,7) & 1 \\ \hline
		$Y$ & $\rho$ & $\deg$ & $b$ & $[d]$ & $(v,p,f)(X)$ & $\#(X)$\\
		\hline
		\endfirsthead
		\multicolumn{7}{c}%
		{\tablename\ \thetable\ \textit{}} \\
		\hline
		$Y$ & $\rho$ & $\deg$ & $b$ & $[d]$ & $(v,p,f)(X)$ & $\#(X)$ \\
		\hline
		\endhead
		\hline \multicolumn{7}{r}{\textit{}} \\
		\endfoot
		\hline
		\endlastfoot
		$Q$ & $1$ & $54$ & $0$ &  & $(5,1,5)$ & $1$ \\ \hhline{=|=|=|=|=|=|=}
		$V_{2.12}$ & 2 & 20 & 3 &  & (14,12,12) & 1 \\ \hline
		$V_{2.17}$ & 2 & 24 & 1 &  & (12,8,12) & 1 \\ \hline
		$V_{2.19}$ & 2 & 26 & 2 &  & (11,8,10) & 1 \\ \hline
		$V_{2.20}$ & 2 & 26 & 0 &  & (11,6,12) & 2 \\ \hline
		$V_{2.21}$ & 2 & 28 & 0 &  & (10,5,11) & 2 \\ \hline
		$V_{2.21}$ & 2 & 28 & 0 &  & (11,6,12) & 1 \\ \hline
		$V_{2.23}$ & 2 & 30 & 1 &  & (9,5,9) & 1 \\ \hline
		$V_{2.22}$ & 2 & 30 & 0 &  & (10,5,11) & 1 \\ \hline
		$V_{2.22}$ & 2 & 30 & 0 &$[-24]$& (9,4,10) & 1 \\ \hline
		$V_{2.24}$ & 2 & 30 & 0 &$[-21]$& (9,4,10) & 1 \\ \hline
		$V_{2.25}$ & 2 & 32 & 1 &  & (8,4,8) & 1 \\ \hline
		$V_{2.25}$ & 2 & 32 & 1 &  & (9,5,9) & 1 \\ \hline
		$V_{2.26}$ & 2 & 34 & 0 &  & (10,5,11) & 1 \\ \hline
		$V_{2.26}$ & 2 & 34 & 0 &  & (8,3,9) & 1 \\ \hline
		$V_{2.26}$ & 2 & 34 & 0 &  & (9,4,10) & 1 \\ \hline
		$V_{2.27}$ & 2 & 38 & 0 &  & (7,2,8) & 1 \\ \hline
		$V_{2.27}$ & 2 & 38 & 0 &  & (8,3,9) & 2 \\ \hline
		$V_{2.28}$ & 2 & 40 & 1 &  & (7,3,7) & 1 \\ \hline
		$V_{2.29}$ & 2 & 40 & 0 &  & (7,2,8) & 1 \\ \hline
		$V_{2.29}$ & 2 & 40 & 0 &  & (8,3,9) & 1 \\ \hline
		$V_{2.30}$ & 2 & 46 & 0 &$[-12]$& (6,1,7) & 1 \\ \hline
		$V_{2.31}$ & 2 & 46 & 0 &$[-13]$& (6,1,7) & 1 \\ \hline
		$V_{2.31}$ & 2 & 46 & 0 &$[-13]$& (7,2,8) & 1 \\ \hline
		$V_{2.32}$ & 2 & 48 & 0 &  & (6,1,7) & 1 \\ \hline
		$V_{2.34}$ & 2 & 54 & 0 &  & (6,1,7) & 1 \\ \hhline{=|=|=|=|=|=|=}
		
		$V_{3.7}$  & 3 & 24 & 1 &  & (12,7,13) &  1 \\ \hline
		$V_{3.10}$ & 3 & 26 & 0 &  & (11,5,13) &  1 \\ \hline
		$V_{3.11}$ & 3 & 28 & 1 &  & (10,5,11) &  1 \\ \hline
		$V_{3.12}$ & 3 & 28 & 0 &  & (10,4,12) &  1 \\ \hline
		$V_{3.12}$ & 3 & 28 & 0 &  & (11,5,13) &  1 \\ \hline
		$V_{3.13}$ & 3 & 30 & 0 &  & (10,4,12) &  2 \\ \hline
		$V_{3.13}$ & 3 & 30 & 0 &  & (9,3,11) &  1 \\ \hline
		$V_{3.14}$ & 3 & 32 & 1 &  & (8,3,9) &  1 \\ \hline
		$V_{3.15}$ & 3 & 32 & 0 &  & (10,4,12) &  1 \\ \hline
		$V_{3.15}$ & 3 & 32 & 0 &  & (9,3,11) &  3 \\ \hline
		$V_{3.16}$ & 3 & 34 & 0 &  & (8,2,10) &  1 \\ \hline
		$V_{3.16}$ & 3 & 34 & 0 &  & (9,3,11) &  1 \\ \hline
		$V_{3.17}$ & 3 & 36 & 0 &$[28]$& (8,2,10) &  2 \\ \hline
		$V_{3.17}$ & 3 & 36 & 0 &$[28]$& (9,3,11) &  1 \\ \hline
		$V_{3.18}$ & 3 & 36 & 0 &$[26]$& (8,2,10) &  1 \\ \hline
		$V_{3.18}$ & 3 & 36 & 0 &$[26]$& (9,3,11) &  1 \\ \hline
		$V_{3.19}$ & 3 & 38 & 0 &$[24]$& (7,1,9) &  1 \\ \hline
		$V_{3.19}$ & 3 & 38 & 0 &$[24]$& (8,2,10) &  1 \\ \hline
		$V_{3.20}$ & 3 & 38 & 0 &$[28]$& (7,1,9) &  1 \\ \hline
		$V_{3.20}$ & 3 & 38 & 0 &$[28]$& (8,2,10) &  1 \\ \hline
		$V_{3.20}$ & 3 & 38 & 0 &$[28]$& (9,3,11) &  1 \\ \hline
		$V_{3.21}$ & 3 & 38 & 0 &$[22]$& (8,2,10) &  1 \\ \hline
		$V_{3.22}$ & 3 & 40 & 0 &  & (7,1,9) &  1 \\ \hline
		$V_{3.23}$ & 3 & 42 & 0 &$[20]$& (7,1,9) &  1 \\ \hline
		$V_{3.23}$ & 3 & 42 & 0 &$[20]$& (8,2,10) &  1 \\ \hline
		$V_{3.24}$ & 3 & 42 & 0 &$[22]$& (7,1,9) &  1 \\ \hline
		$V_{3.24}$ & 3 & 42 & 0 &$[22]$& (8,2,10) &  1 \\ \hline
		$V_{3.25}$ & 3 & 44 & 0 &  & (7,1,9) &  1 \\ \hline
		$V_{3.26}$ & 3 & 46 & 0 &  & (7,1,9) &  1 \\ \hline
		$V_{3.28}$ & 3 & 48 & 0 &  & (7,1,9) &  1 \\ \hhline{=|=|=|=|=|=|=}
		$V_{4.1}$  & 4 & 24 & 1 &  & (12,6,14) & 1 \\ \hline
		$V_{4.2}$  & 4 & 28 & 1 &  & (10,4,12) & 1 \\ \hline
		$V_{4.3}$  & 4 & 30 & 0 &  & (10,3,13) & 1 \\ \hline
		$V_{4.4}$  & 4 & 32 & 0 &$[-40]$& (9,2,12) & 1 \\ \hline
		$V_{4.5}$  & 4 & 32 & 0 &$[-39]$& (9,2,12) & 1 \\ \hline
		$V_{4.6}$  & 4 & 34 & 0 &  & (10,3,13) & 1 \\ \hline
		$V_{4.6}$  & 4 & 34 & 0 &  & (9,2,12) & 1 \\ \hline
		$V_{4.7}$  & 4 & 36 & 0 &  & (8,1,11) & 2 \\ \hline
		$V_{4.7}$  & 4 & 36 & 0 &  & (9,2,12) & 1 \\ \hline
		$V_{4.8}$  & 4 & 38 & 0 &  & (8,1,11) & 1 \\ \hline
		$V_{4.9}$  & 4 & 40 & 0 &  & (8,1,11) & 1 \\ \hline		
	\end{longtable}
\end{center}

Any smooth Fano threefold not listed in the table does not admit any small toric degenerations, since none of nodal toric Fano threefolds has the proper invariants.

\section{Some applications}\label{sec:applications}

We used the classification of smooth Fano threefolds to compare numerical invariants of $X$ and $Y$.
We want to point out that the classification was required only to identify the ``names'' of Fano manifolds.
Our considerations also proved the existence of smooth threefolds such that their sections are canonically embedded curves,
and with principal invariants written in Section~\ref{sec:answer}.
In particular we have provided a new proof that there exists $V_{22}$,
a smooth threefold with $b_2=1$ such that its linear sections
are canonically embedded curves of genus $12$,
a threefold that Fano missed.

Our constructions also help to compute some other invariants of Fano threefolds.

\begin{prop}\label{c1}
Let $X$ be a Gorenstein toric Fano variety with isolated singularities.
Then there exists a smooth anticanonical section $S\in|-K_X|$, and it is a Calabi--Yau variety.
\end{prop}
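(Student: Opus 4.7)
The plan is to produce a smooth $S \in |-K_X|$ by a Bertini-type argument, exploiting that $\Sing X$ is zero-dimensional, and then to derive the Calabi--Yau property from adjunction together with the cohomological vanishings in Proposition~\ref{prop: properties of Fanos}.

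I would first note that $-K_X$ is an ample Cartier divisor (since $X$ is Gorenstein Fano) and invoke the classical toric fact that on a complete toric variety every ample Cartier divisor is base point free: the associated piecewise linear function on the fan is strictly convex, so the sections indexed by lattice points of the moment polytope generate $\OO_X(-K_X)$ at every point. Consequently, for each point $x \in X$ the sections of $-K_X$ not vanishing at $x$ form a nonempty Zariski open in $|-K_X|$. Combining this with classical Bertini on the smooth locus $U = X \setminus \Sing X$, a general member of $|-K_X|$ is smooth on $U$; since by hypothesis $\Sing X = \{p_1, \ldots, p_k\}$ is finite, imposing the finitely many additional open conditions that $S$ avoids each $p_i$ still leaves a dense open in $|-K_X|$. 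A general $S$ in this open is then contained in $U$ and smooth everywhere.

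For the Calabi--Yau property, since $K_X$ is Cartier and $S$ is smooth, the adjunction formula gives $K_S = (K_X + S)|_S = 0$. From the short exact sequence
\[
0 \to \OO_X(K_X) \to \OO_X \to \OO_S \to 0,
\]
Proposition~\ref{prop: properties of Fanos} supplies $H^i(X, \OO_X) = 0$ for $i > 0$, and Serre duality on the Gorenstein variety $X$ gives $H^{i+1}(X, \OO_X(K_X)) \cong H^{n-i-1}(X, \OO_X)^\vee = 0$ for $0 < i < n-1$ where $n = \dim X$; the long exact sequence then forces $H^i(S, \OO_S) = 0$ for $0 < i < \dim S$, so $S$ is Calabi--Yau.

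The only genuinely delicate point is the base-point-freeness of $|-K_X|$ on a singular toric $X$, but this is standard for ample Cartier divisors on complete toric varieties and requires no calculation; everything else reduces to Bertini, adjunction, and a single long exact sequence. I expect no conceptual obstacle beyond being a little careful that the singular points, being isolated, can simply be avoided rather than having to be analyzed locally.
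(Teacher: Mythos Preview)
Your argument is correct and follows essentially the same route as the paper, which dismisses the whole thing as ``a simple corollary of Bertini theorem''. You have simply spelled out what that one-line proof leaves implicit: base-point-freeness of $|-K_X|$ on a toric variety, the avoidance of the finitely many singular points, adjunction, and the standard short exact sequence together with the vanishings of Proposition~\ref{prop: properties of Fanos}.
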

\begin{proof}
	It is a simple corollary of Bertini theorem.
\end{proof}

\begin{prop}
	Smoothings $X_t$ of Gorenstein Calabi--Yau $X_0$ are Calabi--Yau varieties.
\end{prop}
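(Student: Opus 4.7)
The plan is to verify the two defining conditions of a Calabi--Yau variety for each smooth fiber $X_t$: (a) triviality of the canonical class $K_{X_t}$, and (b) the intermediate cohomology vanishing $H^i(X_t,\OO_{X_t})=0$ for $0<i<\dim X_t$.

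Condition (b) is already essentially in hand. The hypotheses preceding Proposition \ref{prop: h^i(Xt)=h^0(X0)} explicitly cover the case of a Gorenstein $X_0$ with canonical singularities that is Calabi--Yau of dimension $\geq 2$, and that proposition then yields $h^i(X_t,\OO_{X_t})=h^i(X_0,\OO_{X_0})=0$ in the required range. The top and bottom cohomology $h^0=h^n=1$ transfer by the same semicontinuity argument (or by Serre duality once (a) is established).

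For condition (a), I would argue as follows. By Theorem \ref{thm: gorenstein} the total space $\XX$ is Gorenstein with canonical singularities, so $K_\XX$ is a genuine Cartier divisor. Because $\pi$ is flat over the unit disk, each fiber $X_t=\pi^{-1}(t)$ is cut out by the regular function $\pi^*(s-t)$ and is therefore a principal divisor on $\XX$, giving $\OO_\XX(X_t)\cong\OO_\XX$. The adjunction identity \ref{eq: K_{Xt}} then reads $K_{X_t}=i_t^*(K_\XX)$ for every $t\in\Delta$. The Calabi--Yau hypothesis on $X_0$ means $K_{X_0}\sim 0$, so $i_0^*(K_\XX)\sim 0$ in $\Pic(X_0)$. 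Proposition \ref{prop: pic-isomorpism of X0 and XX} asserts that $i_0^*:\Pic(\XX)\to\Pic(X_0)$ is an isomorphism, forcing $K_\XX\sim 0$ in $\Pic(\XX)$ and hence $K_{X_t}=i_t^*(K_\XX)\sim 0$ for all $t$.

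The main point to double--check is that Proposition \ref{prop: pic-isomorpism of X0 and XX} really applies in the Calabi--Yau setting rather than only the Fano one. Tracing its derivation, the only analytic input is the vanishing $H^i(X_0,\OO_{X_0})=0$ for $0<i<\dim X_0$, which is used via the exponential sequence to give $\Pic(\XX)\cong H^2(\XX,\ZZ)$ and $\Pic(X_0)\cong H^2(X_0,\ZZ)$; the rest is the topological homotopy equivalence of Corollary \ref{cor: 1}. Both ingredients are available by the standing assumption ``Gorenstein canonical, either almost Fano or Calabi--Yau of dimension $\geq 2$'' that precedes \eqref{eq:1}--\eqref{eq:4}, so the application is direct. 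Combining (a) and (b) with the smoothness of $X_t$ shows that each $X_t$ is a smooth variety with trivial canonical bundle and the required Hodge vanishings, hence Calabi--Yau.
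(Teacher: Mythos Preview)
Your proof is correct and follows essentially the same route as the paper's own argument: use Proposition~\ref{prop: h^i(Xt)=h^0(X0)} for the intermediate vanishing, then combine the adjunction formula \eqref{eq: K_{Xt}} with the isomorphism $i_0^*:\Pic(\XX)\to\Pic(X_0)$ of Proposition~\ref{prop: pic-isomorpism of X0 and XX} to lift the triviality of $K_{X_0}$ to $K_\XX$ and restrict down to each fiber. Your version is simply more explicit, in particular in checking that Proposition~\ref{prop: pic-isomorpism of X0 and XX} applies in the Calabi--Yau (not just Fano) situation.
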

\begin{proof}
	\ref{prop: h^i(Xt)=h^0(X0)} implies $h^i(X_t,\OO)=0$ for $0<i<\dim X_t$. Hence by \ref{prop: pic-isomorpism of X0 and XX} and \ref{eq: K_{Xt}} we have the trivializations $K_{\XX}|_{X_0}=K_{X_0}=\OO_{X_0}\Rightarrow K_{\XX}=\OO_{\XX}$, so $K_{X_t}=K_{\XX}|_{X_t}=\OO_{\XX}|_{X_t}=\OO_{X_t}$.
\end{proof}

\begin{cor}\label{c2}
	Anticanonical sections $X_t$ are the deformations of anticanonical sections of $X_0$.
\end{cor}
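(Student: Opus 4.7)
The plan is to lift a chosen anticanonical section $S_0 \in |-K_{X_0}|$ to an anticanonical divisor $\mathcal{S}$ on the total space $\XX$ of the smoothing, and then observe that its fibers over $\Delta$ furnish the desired deformation. The key input is equation \ref{eq:4}, which states that
\[
H^0(\XX,\OO_{\XX}(-K_{\XX})) \;=\; H^0(X_0,\OO_{X_0}(-K_{X_0})) \otimes H^0(\Delta,\OO),
\]
together with \ref{eq:3}, which says the restriction map $H^0(\XX, \OO_\XX(-K_\XX)) \to H^0(X_t, \OO_{X_t}(-K_{X_t}))$ is surjective for every $t\in\Delta$. Also, by \ref{eq: K_{Xt}} we have $-K_{X_t} = i_t^*(-K_\XX)$, so the anticanonical linear systems on the fibers really are obtained by restriction from $-K_\XX$.

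First I would pick the smooth Calabi--Yau section $S_0 \in |-K_{X_0}|$ provided by Proposition \ref{c1}, represented by $s_0 \in H^0(X_0,\OO_{X_0}(-K_{X_0}))$. By \ref{eq:4} (possibly after shrinking $\Delta$) there exists $s \in H^0(\XX,\OO_\XX(-K_\XX))$ with $i_0^* s = s_0$. Define
\[
\mathcal{S} := Z(s) \subset \XX, \qquad S_t := \mathcal{S}\cap X_t = Z(i_t^* s) \subset X_t.
\]
Since $s_0$ is not identically zero on $X_0$ and $\XX$ is irreducible, $s$ is not a zero-divisor in $\OO_\XX$, so $\mathcal{S}$ is an effective Cartier divisor in $\XX$, and by construction $S_0$ is our original smooth Calabi--Yau section.

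Next I would verify that $\pi|_\mathcal{S}:\mathcal{S}\to\Delta$ is a flat proper morphism: properness is inherited from $\pi$, and flatness holds because $\mathcal{S}$ is cut out by a single non-zero-divisor in a flat family, so $\OO_\mathcal{S}$ fits in the short exact sequence $0\to \OO_\XX(K_\XX)\stackrel{\cdot s}{\to}\OO_\XX\to \OO_\mathcal{S}\to 0$, and flatness of $\mathcal{S}$ over $\Delta$ follows from flatness of the two outer terms. Each fiber $S_t$ lies in $|-K_{X_t}|$ by \ref{eq: K_{Xt}}, so it is an anticanonical section of $X_t$. Smoothness of $S_t$ for small $|t|$ follows because smoothness of fibers of a flat projective morphism is open on the base: $S_0$ is smooth and its smooth locus forms an open condition along $\mathcal{S}$.

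The only nontrivial point is ensuring we can realize a prescribed anticanonical section of $X_0$ as the central fiber of a family over all of $\Delta$; this is precisely what \ref{eq:4} guarantees (so no obstruction appears). Putting everything together, $\mathcal{S}\to\Delta$ is a deformation in the sense of Definition \ref{def: deformation} with central fiber $S_0$ and general fiber the anticanonical section $S_t\subset X_t$, which is exactly the claim.
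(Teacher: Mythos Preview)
Your approach is essentially the paper's: both lift a section $s_0\in H^0(X_0,-K_{X_0})$ to $s=s_0\otimes 1\in H^0(\XX,-K_\XX)$ via \eqref{eq:4}, take $\mathcal{S}=Z(s)$, and then check that $\mathcal{S}\to\Delta$ is flat. The only real difference is in the flatness step: the paper twists the defining sequence by $\OO_\XX(-mK_\XX)$ for all $m$ and uses the vanishings \eqref{eq:1}--\eqref{eq:4} to show the Hilbert polynomial of $\mathcal{S}_t$ is constant in $t$, while you argue directly from the $m=0$ sequence.

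Two small remarks. First, the sentence ``flatness of $\mathcal{S}$ over $\Delta$ follows from flatness of the two outer terms'' is not a general fact about short exact sequences; what actually makes it work is that $\Delta$ is one-dimensional and $i_0^*s=s_0\neq 0$, so multiplication by $s$ stays injective after restriction to the central fiber, whence $\mathrm{Tor}_1^{\OO_\Delta}(\OO_{\mathcal{S}},k(0))=0$. You already recorded $s_0\neq0$, so the argument is there---just tighten the phrasing. Second, your final paragraph about smoothness of $S_t$ for small $|t|$ is not part of Corollary~\ref{c2}; it is exactly Corollary~\ref{c3}.
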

\begin{proof}
	If $Y_0$ is some anticanonical section corresponding to the element $y_0\in H^0(X_0,-K_{X_0})$, then $Y$ is anticanonical section of $X$ corresponding to $y_0\otimes 1\in H^0(X_0,-K_{X_0})\otimes H^0(\Delta,\OO_\Delta)=H^0(\XX,-K_{\XX})$	(see \ref{eq:4}), and establishing the required deformation. From the exact sequence
	\[
	0\longrightarrow\OO_{\XX}((-m-1)K_{\XX})\longrightarrow\OO(-mK_{\XX})\longrightarrow\OO_{\mathcal{Y}}(-mK_{\XX})\longrightarrow 0,
	\]
	vanishings \ref{prop: properties of Fanos} and \ref{eq:1}, \ref{eq:2}, \ref{eq:3}, \ref{eq:4} (with the similar for $\OO(-mK_{\XX})$) we deduce that Hilbert polynomial $\mathcal{Y}_t$ does not depend on $t$, so the family $\mathcal{Y}_t$ is flat.
\end{proof}

\begin{cor}\label{c3}
	If there exists a smooth anticanonical section of $X_0$, then general anticanonical section of $X_t$ for general $t$ is smooth.
\end{cor}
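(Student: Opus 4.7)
The plan is to build the relative discriminant of the anticanonical linear system as a closed analytic subvariety of $\Delta\times\PP^N$, observe that the hypothesis produces a point missed by it, and use properness to conclude that this discriminant misses a Zariski-dense open subset of $\PP^N$ in every fibre over a neighbourhood of $0\in\Delta$.

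First I would use the vanishings \ref{eq:1}--\ref{eq:4} together with Grauert's base-change theorem to see that $\pi_*\OO_\XX(-K_\XX)$ is locally free of rank $N+1:=h^0(X_0,-K_{X_0})$ and that for every $t\in\Delta$ the restriction map $\pi_*\OO_\XX(-K_\XX)\otimes k(t)\to H^0(X_t,-K_{X_t})$ is an isomorphism. Consequently, the projectivized bundle of relative anticanonical sections is the trivial bundle $\Delta\times\PP^N$, and every member of $|-K_{X_t}|$ is the restriction of a global section of $-K_\XX$ coming from a point $[s]\in\PP^N$ that is independent of $t$. Let $[s_0]\in\PP^N$ be the class corresponding to the hypothesized smooth anticanonical section of $X_0$.

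Next I would form the universal anticanonical divisor $\mathcal{Y}\subset\XX\times\PP^N$ and its relative singular locus
\[
\Sigma=\{(x,[s])\in\mathcal{Y}\,:\,(d(s|_{X_{\pi(x)}}))_x=0\},
\]
which is a closed analytic subset of $\XX\times\PP^N$. Since $\pi$ is proper, so is the projection $\XX\times\PP^N\to\Delta\times\PP^N$, and by Remmert's proper mapping theorem the image $D\subset\Delta\times\PP^N$ of $\Sigma$ is a closed analytic subvariety; by construction, the fibre $D_t\subset\PP^N$ is exactly the discriminant locus of the linear system $|-K_{X_t}|$.

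The hypothesis rephrases as $(0,[s_0])\notin D$, so the complement $V=(\Delta\times\PP^N)\setminus D$ is a non-empty open subset. Since the projection $\Delta\times\PP^N\to\Delta$ is an open map, the image of $V$ is an open neighbourhood $U\ni 0$ in $\Delta$. For every $t\in U$ there is some $[s]\in\PP^N$ with $(t,[s])\in V$, i.e.\ $D_t\subsetneq\PP^N$. As $\PP^N$ is irreducible, the complement of the proper closed analytic subvariety $D_t$ is Zariski-dense, so a general anticanonical section of $X_t$ is smooth---which is the claim. The main technical subtlety is the analytic set-up of $\Sigma$: one must check that ``$s|_{X_t}$ is singular at $x$'' is cut out by $s(x)=0$ together with the vanishing of the fibrewise derivatives of $s$, which is local and routine on the smooth locus of $\XX$, and the exceptional set $\Sing\XX\subset X_0$ only contributes to $D_0$ and so does not affect the conclusion for generic $t\in U$.
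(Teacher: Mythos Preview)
Your argument is correct. The paper states Corollary~\ref{c3} without proof, so the intended justification is simply: by Corollary~\ref{c2} the given smooth $S\in|-K_{X_0}|$ extends to a flat family $\mathcal{Y}\to\Delta$ of anticanonical sections, and since smoothness of the fibres is an open condition on the base of a proper flat family, $\mathcal{Y}_t$ is smooth for $t$ near $0$; once $|-K_{X_t}|$ contains one smooth member its discriminant is a proper closed subset of $\PP^N$, so the general member is smooth.

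Your route is slightly different in packaging: rather than following a single section through the family via \ref{c2}, you build the full relative discriminant $D\subset\Delta\times\PP^N$ at once and use Remmert's theorem to see it is closed. This is a bit more machinery than the paper needs, but it has the virtue of being self-contained (you do not invoke \ref{c2}) and of making explicit why ``general $t$'' and ``general section'' can be separated. The only delicate point you flag yourself---the definition of $\Sigma$ over $\Sing\XX\subset X_0$---is harmless for the reason you give, and in fact one can sidestep it entirely by the paper's approach, since the chosen smooth section $S\subset X_0$ already misses $\Sing X_0$ and the family $\mathcal{Y}$ of \ref{c2} lives in the smooth locus of $\pi$ near $t=0$.
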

\begin{cor}\label{c4}
	If smooth Fano $Y$ is a smoothing of Gorenstein toric Fano variety with isolated singularities then there exists a smooth anticanonical section $S'\in|-K_Y|$.
\end{cor}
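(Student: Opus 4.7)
The statement is essentially a two-step chain of results already proved, so my plan is simply to stitch them together.

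The plan is to combine Proposition \ref{c1} with Corollary \ref{c3}. Let $X_0$ denote the Gorenstein toric Fano variety with isolated singularities of which $Y$ is a smoothing, so $Y = X_t$ for some $t \neq 0$ in a one-parameter family $\pi : \XX \to \Delta$ with $X_0 = \pi^{-1}(0)$. First I would apply Proposition \ref{c1} to $X_0$: since $X_0$ is Gorenstein toric Fano with isolated singularities, a Bertini-type argument produces a smooth anticanonical section $S \in |-K_{X_0}|$, which is furthermore a Calabi--Yau variety.

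Next I would feed this into Corollary \ref{c3}. That corollary says precisely: if a smooth anticanonical section of the central fiber $X_0$ exists, then a general anticanonical section of $X_t$ is smooth for general $t$. The existence of such a section of $X_0$ has just been secured in the previous step, so we conclude that a general member $S' \in |-K_{X_t}|$ is smooth for general $t \in \Delta$. Taking $t$ generic so that $X_t = Y$ (this is fine since all smooth fibers are diffeomorphic and in particular all sufficiently near $Y$ can be used), we obtain a smooth anticanonical section $S' \in |-K_Y|$, which is what was required.

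There is no real obstacle: the content lies entirely in \ref{c1} (Bertini) and in \ref{c3} (which in turn rests on \ref{c2}, i.e.\ on the flatness of the relative anticanonical linear system, itself a consequence of \ref{eq:4} together with the vanishings \ref{eq:1}--\ref{eq:3}). The only thing one should be slightly careful about is the word ``general'': smoothness of $S'$ is an open condition on $|-K_{X_t}|$ for each $t$, and Corollary \ref{c3} provides it for general $t$ as well, so one picks $t$ in the common open locus and identifies $X_t$ with $Y$ via Ehresmann's theorem.
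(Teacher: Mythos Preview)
Your proposal is correct and matches the paper's own proof, which simply reads ``This is a corollary of \ref{c1}, \ref{c2} and \ref{c3}.'' You have merely unpacked this one-line citation: apply \ref{c1} to the central fiber, then feed the resulting smooth section into \ref{c3} (whose validity rests on \ref{c2}).
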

\begin{proof}
	This is a corollary of \ref{c1}, \ref{c2} and \ref{c3}.
\end{proof}
For a subvariety $Z\subset X$ (and divisor $H$) denote by $I^X_H$ the fundamental term of $I$-series of $X$ (with respect to $H$), and by $I^{X\to Z}$ --- the fundamental term of $I$-series of $Z$ restricted from $X$ (see \cite{Gi97,Ga00,Pr1}). Givental's theorem~\cite{Gi97} compute the $I$-series of smooth complete intersection $Z$ of sections of numerically effective line bundles $\OO(Z_i)$, when $Z$ if an almost Fano inside smooth toric $X$ (the similar statement holds for any smooth complete intersection in singular toric variety as well, see \cite{Pr3}).
In particular the $I$-series of toric Fano $X=\PP(\Delta)$ of index $r(Y)>1$ is equal to the
series of constant terms $\pi_f$ of Laurent polynomial $f(x)=\sum_{m\in\Delta\cap M}x^m-1$. Let $[1]g$ denote the
coefficient at $1=x_0$ in Laurent series $f=g(x)$. Then $\pi_f(t)=[1]e^{tf(x)}$.

Let $X$ be a small toric degeneration of $Y$ and $\phi:\widetilde{X}\to X$ be some small crepant resolution.

\begin{prop}\label{prop: I-series}
	$I$-series for $I^{Y\to S'}$ restricted from $\Pic(Y)$ to $S'$ is equal to $I$-series for $\widetilde{X}$ restricted
	from $\Image[\Pic(X)\to\Pic(\widetilde{X})]$ to $\phi^{-1}(S)\cong S$.
\end{prop}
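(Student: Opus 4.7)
The strategy is to combine the deformation invariance of Gromov--Witten invariants with the fact that the small crepant resolution $\phi: \widetilde{X} \to X$ is an isomorphism away from the nodes, so one can match the $I$-series of $Y$ with that of $\widetilde{X}$ through the singular model $X$.

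First I would apply Corollaries \ref{c1}--\ref{c4} to build a flat family of smooth anticanonical Calabi--Yau sections $\pi_S: \mathcal{S} \to \Delta$ sitting inside $\XX \to \Delta$, with $\mathcal{S}_0 = S \subset X$ a smooth anticanonical section (in particular disjoint from $\Sing(X)$) and $\mathcal{S}_t = S' \subset Y$ for $t\ne 0$. Each fibre $\mathcal{S}_t$ is a smooth Calabi--Yau, so the total space $\mathcal{S}$ is smooth, and the inclusions $S \hookrightarrow X$, $S' \hookrightarrow Y$ fit together into $\mathcal{S} \hookrightarrow \XX$ over $\Delta$.

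Next I would invoke deformation invariance of Gromov--Witten invariants for the smooth flat family $\mathcal{S}/\Delta$. Using the identifications $\Pic(Y) \cong \Pic(\XX) \cong \Pic(X)$ supplied by Proposition \ref{prop: pic-isomorpism of X0 and XX} and Theorem \ref{thm: JR}, every $H \in \Pic(Y)$ has a canonical partner $H_0 \in \Pic(X)$, and the restrictions of $H$ and $H_0$ to the respective fibres of $\mathcal{S}$ are specialisations of a single class on $\mathcal{S}$. Because Gromov--Witten invariants of a smooth proper family are deformation invariant, this gives $I^{Y \to S'}_H = I^{X \to S}_{H_0}$ for every such $H$.

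Finally I would compare $I^{X \to S}$ with $I^{\widetilde{X} \to \phi^{-1}(S)}$. Since $\phi$ is small and $S \cap \Sing(X) = \emptyset$, the restriction $\phi^{-1}(S) \to S$ is an isomorphism, and the pullback $\phi^*: \Pic(X) \hookrightarrow \Pic(\widetilde{X})$ has image exactly $\Image[\Pic(X) \to \Pic(\widetilde{X})]$. The curve classes in $\widetilde{X}$ pairing trivially with every $\phi^*H$ are precisely the exceptional $\mathbb{P}^1$'s over the nodes; these lie in $\phi^{-1}(\Sing(X))$, which is disjoint from $\phi^{-1}(S)$, so they make no contribution after restriction to the section. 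Therefore, on the image of $\Pic(X)$, the restricted $I$-series of $\widetilde{X}$ along $\phi^{-1}(S)$ matches $I^{X \to S}$, and chaining with the previous step yields the proposition.

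The main obstacle is step two: the standard deformation-invariance theorem for GW invariants demands a smooth (or virtually smooth) total space, whereas $\XX$ is only Gorenstein with terminal singularities by Theorem \ref{thm: gorenstein}. Since $\mathcal{S}$ is itself smooth with smooth Calabi--Yau fibres, the virtual fundamental class machinery applies to $\mathcal{S}$ directly; the only delicate point is transferring the cohomological insertions from the singular ambient $\XX$ to $\mathcal{S}$, which is legitimate because $\mathcal{S}$ avoids $\Sing(\XX)$. Making this transfer compatible with the Picard-group identifications above is the technical heart of the argument.
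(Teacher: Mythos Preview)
Your proposal is correct and follows essentially the same route as the paper: the paper's own proof invokes \ref{c1} to get a smooth Calabi--Yau section $S$, \ref{c2} to place $S$ and $S'$ in one deformation class, and the smallness hypothesis to identify $\Pic(X)\cong\Pic(Y)$, then simply asserts the equality of $I$-series. You have in fact written out more than the paper does --- the explicit appeal to deformation invariance of Gromov--Witten invariants on the smooth family $\mathcal{S}$, and the comparison of $I^{X\to S}$ with $I^{\widetilde{X}\to\phi^{-1}(S)}$ via the fact that $\phi$ is an isomorphism over $S$, are both left implicit in the paper's terse argument.
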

\begin{proof}
	By \ref{c1} the general element $S$ of anticanonical linear system $|-K_X|$ of Gorenstein Fano $X$ with isolated terminal singularities is a smooth Calabi--Yau. As we have shown in \ref{c2}, smooth anticanonical sections of $X$ and its smoothing $Y$ lie in the same deformation class. Picard group	$\Pic X$ is isomorphic to $\Pic Y$ by the assumption of smallness. Consider $\mathcal{H}\in\Pic(X)$. Then 
	\[
	I_{\mathcal{H}_S}^{\widetilde{X}\to S}=I_{\mathcal{H}_{S'}^{Y\to S'}}.  \]
\end{proof}

\begin{ex}
Consider Laurent polynomial
\[
	f_1=xyz+x+y+z+x^{-1}+y^{-1}+z^{-1},
\]
its Newton polytope $\Delta=\Delta(f)$,
and the corresponding toric variety $X=\PP(\Delta^\vee)$. 
One can construct variety $X$ as follows:
let $\hat{X}$ be a blowup of a point on $\PP^1\times\PP^1\times\PP^1$;
then $\hat{X}$ is almost Fano, but not Fano since the proper transforms of coordinate lines do not intersect $-K_{\hat{X}}$; 
the contraction $X$ of these lines is a Fano variety with 3 nodes --- images of contracted curves, 
and $\hat{X}$ is its small crepant resolution.
Three ordinary double points of $X$ correspond to 3 quadrangular faces $(xyz,x,y,z^{-1})$, $(xyz, x, z, y^{-1})$ and $(xyz, y, z, x^{-1})$.
Since $\hat{X} \to X$ is a small crepant resolution, we have $\deg(X) = (-K_X)^3 = (-K_{\hat{X}})^3 = (-K_{\PP^1\times\PP^1\times\PP^1})^3 - 8 = 40$.
Let $Y$ be a Fano smoothing of $X$.
Consider a Laurent polynomial with Newton polytope $\Delta$:
\[
	f_a=\sum a_mx^m=a_{xyz}xyz+a_xx+a_yy+a_zz+a_{x^{-1}}x^{-1}+a_{y^{-1}}y^{-1}+a_{z^{-1}}z^{-1}.
\]
It corresponds to the divisor $\sum b_mD_m\in\Pic(\hat{X})\otimes\CC$, such that $a_m=\exp{2\pi i b_m}$.
This divisor is a pullback of Cartier divisor on $X$, if its coefficients satisfy 3 conditions of local principality
\begin{gather*}
	b_{xyz}+b_{x^{-1}}=b_y+b_z,\\
	b_{xyz}+b_{y^{-1}}=b_x+b_y,\\
	b_{xyz}+b_{z^{-1}}=b_x+b_z.
\end{gather*}
Principal divisors are
\[
	(b_x+b_y+b_z) xyz + b_x x + b_y y + b_z z - b_x x^{-1} - b_y y^{-1} - b_z z^{-1}.
\]
	$X$ has index 2, and its Picard group is generated by $-D_{xyz}+D_{x^{-1}}+D_{y^{-1}}+D_{z^{-1}}$.
Modulo principal divisors the Laurent polynomial corresponding to $\alpha$-multiple of a generator of $\Pic(X)$
is equal to $f_t = t(xyz+x+y+z+x^{-1}+y^{-1}+z^{-1})$, $t=\exp{\pi i\alpha}$.
	
	By the virtue of \cite{Gi97} $I^{\hat{X}}_{-K_{\hat{X}},1}(t)=\pi_{f+\alpha}(t)$, i.e. $I$-series of $\hat{X}$ is equal to $\pi_{f_1}$ up to renormalization\footnote{As we will show below index of $\hat{X}$ and $X$ is equal to 2, hence renormalization is trivial: $\alpha=0$, $\widetilde{t}=t$.}. Let us compute $\pi_{f_1}$. The products of monomials $\prod_{n_i}(x^{m_i})^{n_i}$ gives a nonzero summand to the series of constant terms if $\sum n_im_i=0$; in our case put $n_{xyz}=d,\ n_x=a,\ n_y=b,\ n_z=c$. Then $n_{x^{-1}}=a+d$,  $n_{y^{-1}}=b+d$, $n_{z^{-1}}=c+d$. Hence
	\begin{equation}\label{eq: Phi f1=}
		\pi_{f_1}=\sum_{a,b,c,d\geq 0}\frac{(2a+2b+2c+4d)!}{a!b!c!d!(a+d)!(b+d)!(c+d)!}t^{2a+2b+2c+4d}=1+6t^2+114t^4+2940t^6+87570t^8+\ldots
	\end{equation}
	By \ref{c4} general anticanonical section $S'\in|-K_Y|$ is smooth. Applying the proposition \ref{prop: I-series}, we
	conclude that the restricted from $Y$ regularized $I$-series $I^{Y\to S'}_{-K_Y,1}$ for smooth anticanonical section of
	$Y$ is equal to $\pi_f$.
	
	Therefore we computed the $I$-series for the smoothing $Y$ of $X$ not using the geometry of $Y$. It is easy to check that $Y$ is a Fano variety $B_5$, because it is unique Fano threefold with invariants $(\rho,\ r,\ \deg,\ b)=(1,2,40,0)$. Since $B_5$ is a section of Grassmannian $G(2,5)$ by three hyperplanes, its $I$-series may be computed by applying the quantum Lefschetz formula \cite{Ga00} to the $I$-series of $G(2,5)$ provided in \cite{BFKS1,BCK03}:
	\[
	I_{G(2,5)}=\sum_{d\geq 0}\frac{t^d}{(d!)^2}\sum_{d\geq j_2\geq j_1\geq j_0=0}\frac{1}{(d-j_2)!\prod_{i=2}^{3}((d-j_{i-1})!(j_{i-1}-j_{i-2})!j_{i-1}!)}.
	\]
	Applying quantum Lefschetz to the $I$-series of Grassmannian $I_{G(2,5)}$ one indeed deduces is \ref{eq: Phi f1=}.
\end{ex}

\section{Generalizations}\label{sec:generalizations}

Unfortunately only half of non-toric Fano threefolds are smoothings of nodal toric. In particular the only one Fano threefold of principal series admit a small toric degeneration (it is the variety $V_{22}$). Note that for nodal toric varieties it is easy to prove projective normality and the smoothness of general anticanonical section, and one can show these properties holds for the smoothing as well (as in \ref{c2}). All the smoothings $Y$ we obtained are rational. The same method could be applied to obtain more general class of smoothings, if we consider not only the toric varieties, but also complete intersections inside them (with Gorenstein terminal singularities) --- these varieties also admit a smoothing (\ref{thm: Namikawa}), and there are similar relations between invariants of the smoothing and the degeneration, and it is not so hard to compute the cohomology of such varieties \cite{DH86}, Hilbert polynomial, and Gromov--Witten theory.

But birational class of complete intersection in toric variety is arbitrary. Many of non-degenerating to nodal toric threefolds are themselves the complete intersections in weighted projective spaces. Batyrev and Kreuzer found all nodal half-anticanonical hypersurfaces in toric fourfolds of index 2: there are around 160 of them, and 100 are cones over the toric varieties studied in this
paper, the remaining 60 cases cover almost all non-degenerating to toric Fano varieties.

Another direction for generalizations is toric varieties with arbitrary Gorenstein singularities. For a pair of nonterminal Gorenstein toric Fano threefolds $\PP(\Delta_{16})$, $\PP(\Delta_{18})$ Przyjalkowski \cite{Pr1} constructed a pair of Laurent polynomials $f_{16}$, $f_{18}$ with Newton polytopes coinciding with the corresponding fan polytopes $\Delta_{16}$, $\Delta_{18}$ such that these polynomials are Landau--Ginzburg models mirror symmetric to Fano varieties of principal series $V_{16}$ and $V_{18}$; so it is possible that toric degenerations method works for a larger class of singularities (all Gorenstein?), although we
do not know if the pairs $\PP(\Delta_{16}, V_{16})$ and $\PP(\Delta_{18}, V_{18})$ are the degenerations.

\subsection*{Acknowledgement} 
Author thanks V. Iskovskikh and V. Golyshev for the interest in this work, C. Shramov for the support during the preparation of this paper, Yu. Prokhorov and D. van Straten for useful discussions, and A. Kuznetsov for a lot of provided improvements.

\providecommand{\arxiv}[1]{\href{http://arxiv.org/abs/#1}{arXiv:#1}}

\bigskip

{\bf Author's remark about this version of the article.}

This paper is a part of my PhD thesis titled
``Toric degenerations of Fano manifolds'' 
that was defended on April 3, 2008 at Steklov Mathematical Institute.
The text of the thesis and a brief summary (both in Russian) are available at [1] and [2].
Original version of this paper was written in Russian and is available at [3].
I translated this text to English in Fall 2008 during my visit to Johannes Gutenberg University in Mainz,
and since then first English version of this text is available at [4]. 
Unfortunately I have lost the TeX-file of the English version soon after, and all my attempts to upload PDF-file to \href{https://arxiv.org}{arXiv} failed.
In 2012 I have made a 4-page-long English version of this paper omitting all proofs, it was available at [5]. 

In 2015 Egor Yasinsky re-TeXified this paper, based on Russian TeX-file, English PDF-file and a short English TeX-file.
Since then [5] corresponds to the new version.
I am greatly indebted to Egor: if not him, this paper would not be on arXiv.

I used this opportunity to fix some of the most obvious typos and to make the layout a little bit prettier and easier to read.
In this version section ``Introduction'' is not numbered, so numeration of Sections and the respective propositions is shifted by one.
Apart from that, this version of the paper is almost the same as 2008 version.
In the passed ten years main result has found various applications and generalizations that are not mentioned in the last two sections,
I might update them in the next version. 
All the remaining mistakes and typos are due to myself.

Sergey Galkin, 2018

[1] \url{http://www.mi.ras.ru/~galkin/papers/disser.pdf}	
[2] \url{http://www.mi.ras.ru/~galkin/papers/auto.pdf}		
[3] \url{http://www.mi.ras.ru/~galkin/papers/term_def.pdf}	
[4] \url{http://www.mi.ras.ru/~galkin/work/3a.pdf}		
[5] \url{http://research.ipmu.jp/ipmu/sysimg/ipmu/893.pdf}

\end{document}